\newtheorem{thm}{Theorem}
\newtheorem{lem}{Lemma}
\newtheorem{prop}{Proposition}
\newtheorem{coro}{Corollary}
\newtheorem{defi}{Definition}
\theoremstyle{definition}					
\newtheorem{rem}{Remark}
\newcommand{\R}{\mathbb{R}}
\newcommand{\N}{\mathbb{N}}
\newcommand{\Z}{\mathbb{Z}}
\newcommand{\Q}{\mathbb{Q}}
\title{Liminf Sets in Simultaneous Diophantine Approximation}
\author{Faustin ADICEAM\\
   Department of Mathematics, Logic House,\\ 
   National University of Ireland at Maynooth\\
   email~: \texttt{fadiceam@gmail.com}}
\date{}
\begin{document}
\maketitle

\begin{abstract}
Let $\mathcal{Q}$ be an infinite set of positive integers. Denote by $W^*_{\tau, n}(\mathcal{Q})$ the set of $n$--tuples of real numbers simultaneously $\tau$--well approximable by infinitely many rationals with denominators in $\mathcal{Q}$ but by only finitely many rationals with denominators in the complement of $\mathcal{Q}$. The Hausdorff dimension of the liminf set $W^*_{\tau, n}(\mathcal{Q})$ is computed when $n\ge 1$ and $\tau >2+1/n$. A $p$--adic analogue of the problem is also studied.
\end{abstract}

\section{Introduction and statement of the result}

Let $n \ge 1$ be an integer and $\tau > 1$ a real number. Given an infinite set of positive integers $\mathcal{Q}$, denote by $W_{\tau, n}(\mathcal{Q})$ the set of points in dimension $n\ge 1$ approximable at order $\tau$ by infinitely many rationals with denominators in $\mathcal{Q}$, i.e. the limsup set 
\begin{equation}\label{enslimsup}
W_{\tau, n}(\mathcal{Q}) := \left\{ \bm{x}\in \R^n \; : \; \left|\bm{x}-\bm{p}/q\right|<q^{-\tau} \; \; \mbox{ for  i.m. } (\bm{p},q) \in \Z^n\times\mathcal{Q} \right\}.
\end{equation} 
Here and throughout, \textit{i.m.} stands for \textit{infinitely many}, $\left|\bm{x}\right|$ is the usual supremum norm of a vector $\bm{x}\in\R^n$ and $\bm{p}/q$ is shorthand notation for the rational vector $(p_1/q, \dots , p_n/q)$, where $\bm{p} = (p_1, \dots , p_n)\in\Z^n$.

\sloppy Jarn{\'\i}k in~\cite{multijar} and Besicovitch in~\cite{besico} proved independently that the Hausdorff dimension $\dim W_{\tau, n}(\N)$ of the set $W_{\tau, n}(\N)$ was equal to $(n+1)/\tau$ as soon as $\tau>1+1/n$. Subsequently, Borosh and Fraenkel generalized this result in~\cite{borofraen} to the case of any infinite subset $\mathcal{Q}\subset \N$ by showing that $\dim W_{\tau, n}(\mathcal{Q})=(n+\nu(\mathcal{Q}))/\tau$ when $\tau>1+\nu(\mathcal{Q})/n$, where $\nu(\mathcal{Q})$ is the exponent of convergence of $\mathcal{Q}$ defined as 
\begin{equation}\label{expconvgce}
\nu(\mathcal{Q}) := \inf \left\{\nu >0 \; : \; \sum_{q\in \mathcal{Q}} q^{-\nu} < \infty \right\} \in [0,1].
\end{equation}

On the other hand, the corresponding liminf set 
\begin{equation}\label{limfinreel}
W^*_{\tau, n}(\mathcal{Q}) := W_{\tau, n}(\N)\, \backslash \, W_{\tau, n}\left(\N\backslash \mathcal{Q}\right) = W_{\tau, n}(\mathcal{Q})\, \backslash \, W_{\tau, n}\left(\N\backslash \mathcal{Q}\right)
\end{equation}
has received much less attention. Explicitly, this is the set of all those vectors $\bm{x}$ in $\R^n$ which admit infinitely many approximations at order $\tau$ as in~(\ref{enslimsup}) by rational vectors $(\bm{p},q)$ whose denominators $q$ lie in $\mathcal{Q}$, but only finitely many approximations by rational vectors whose denominators do not lie in $\mathcal{Q}$. The author considered in~\cite{noteliminfmoi} the case where the set $Q$ was a so--called $\N\backslash \mathcal{Q}$--free set (that is, a set $\mathcal{Q}$ whose elements are divisible by no integer in the complement of $\mathcal{Q}$) and exhibited a non--trivial lower bound for $\dim W^*_{\tau, n}(\mathcal{Q})$ when $n\ge 2$ and $\tau > 1+1/(n-1)$. He also provided a construction, explicit in terms of the continued fraction expansion, of uncountably many Liouville numbers lying in the set $W^*_{\tau, 1}(d\N)$, where $d\ge 2$ is any integer and $\tau>2$.

It is not clear that the set $W^*_{\tau, n}(\mathcal{Q})$ should be non--empty for a general infinite subset $\mathcal{Q}\subset\N$. This is in particular implied by the following much stronger statement which is the main result of this paper~:

\begin{thm}\label{thmprinciliminf}
Let $\mathcal{Q}\subset\N$ be infinite. Assume that $n\ge 1$ is an integer and that $\tau > 2+1/n$ is a real number. Then $$\dim W^*_{\tau, n}(\mathcal{Q}) = \frac{n+\nu(\mathcal{Q})}{\tau}\cdot$$
\end{thm}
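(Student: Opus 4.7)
The upper bound $\dim W^*_{\tau,n}(\mathcal{Q})\le (n+\nu(\mathcal{Q}))/\tau$ is immediate from the inclusion $W^*_{\tau,n}(\mathcal{Q})\subset W_{\tau,n}(\mathcal{Q})$ combined with the theorem of Borosh and Fraenkel recalled above. All the difficulty lies in proving the matching lower bound.

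My plan is to construct, for every sufficiently small $\epsilon>0$, a Cantor-type subset $K\subset W^*_{\tau,n}(\mathcal{Q})$ of Hausdorff dimension at least $s:=(n+\nu(\mathcal{Q})-\epsilon)/\tau$. First I would extract from $\mathcal{Q}$ a lacunary subsequence $q_1<q_2<\cdots$ whose density still reflects the exponent of convergence~(\ref{expconvgce}) up to $\epsilon$. The construction proper is a Jarn\'{\i}k--Besicovitch tree: at stage $k$ each surviving ball has the form $B(\bm{p}/q_k, q_k^{-\tau})$; at stage $k+1$ it is subdivided into disjoint balls $B(\bm{p}'/q_{k+1}, q_{k+1}^{-\tau})$ with $\bm{p}'/q_{k+1}$ lying in it, a standard volume count showing that roughly $q_{k+1}^{n}q_k^{-\tau n}$ such subballs are available.

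In order to force the limit points into $W^*_{\tau,n}(\mathcal{Q})$ rather than merely into $W_{\tau,n}(\mathcal{Q})$, I would superimpose a deletion step: before subdividing each level-$k$ ball $B$, remove from it the enlarged neighbourhoods $B(\bm{p}'/q', 2(q')^{-\tau})$ of every ``forbidden'' rational, namely those $\bm{p}'/q'$ with $q'\in\N\setminus\mathcal{Q}$ lying in the relevant range between consecutive levels. The heart of the argument is then the volume estimate
\[
\sum_{q'\in\N\setminus\mathcal{Q}}\#\{\bm{p}'\in\Z^n \,:\, \bm{p}'/q'\in B\}\cdot (q')^{-\tau n}\ll |B|,
\]
with $q'$ truncated to that range. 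The convergence of the associated series already requires $\tau>1+1/n$; the stronger threshold $\tau>2+1/n$ in the statement is what is needed to make the right-hand side a genuinely small fraction of $|B|$, once one exploits the separation estimate $\|\bm{p}/q_k-\bm{p}'/q'\|\ge 1/(q_kq')$ which forces any forbidden rational lying inside $B(\bm{p}/q_k, q_k^{-\tau})$ to satisfy $q'>q_k^{\tau-1}$.

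Once the construction is completed, the limit points are in $W_{\tau,n}(\mathcal{Q})$ by design. The deletion, together with the constraint $\tau>2$ (which, via the same separation estimate, prevents any rational with $q'\le q_k$ from giving a $\tau$-approximation inside $B$ once $q_k$ is large enough), ensures that no rational with $q'\notin\mathcal{Q}$ approximates a limit point at order $\tau$ infinitely often, so $K\subset W^*_{\tau,n}(\mathcal{Q})$. A mass distribution uniform on the surviving balls at each generation together with the mass distribution principle then yields $\dim K\ge s$, and letting $\epsilon\to 0$ concludes the proof. The main obstacle I anticipate is making the volume estimate above quantitative enough both to preserve the correct growth rate of surviving subballs (so that $K$ has dimension at least $s$) and to match cleanly the hypothesis $\tau>2+1/n$.
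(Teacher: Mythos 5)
Your overall strategy --- a nested Cantor construction inside $W_{\tau,n}(\mathcal{Q})$, a deletion step to kill $\tau$--approximations with forbidden denominators, the separation estimate $\left|\bm{p}/q-\bm{p}'/q'\right|\ge 1/(qq')$, and the Mass Distribution Principle --- is indeed the strategy of the paper. But as written your construction cannot reach the dimension $(n+\nu(\mathcal{Q}))/\tau$: at each level you subdivide using a \emph{single} denominator $q_{k+1}$, and your count ``roughly $q_{k+1}^{n}q_k^{-\tau n}$ subballs'' is exactly the one--denominator count $q_{k+1}^{n}\lambda_n(B)$. A Cantor set in which each parent of diameter $q_k^{-\tau}$ has $m_{k+1}\asymp q_{k+1}^{n}\lambda_n(B)$ children of diameter $q_{k+1}^{-\tau}$ has Hausdorff dimension at most $n/\tau$: taking $U$ equal to a single level--$(k+1)$ ball, $\mu(U)=(m_1\cdots m_{k+1})^{-1}\asymp q_{k+1}^{-n}C_k\asymp |U|^{n/\tau}C_k$ with $C_k$ depending only on $q_1,\dots,q_k$, and since $q_{k+1}$ may be taken arbitrarily large the exponent in the Mass Distribution Principle cannot exceed $n/\tau$. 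To capture the extra $\nu(\mathcal{Q})$ one must use, at each level, \emph{all} the $\asymp N^{\nu(\mathcal{Q})-\delta}$ denominators of (a regularised subset of) $\mathcal{Q}$ lying in a dyadic block $(N,2N]$ simultaneously, yielding $\asymp N^{n+\nu(\mathcal{Q})-\delta}\lambda_n(B)$ children separated by $\asymp N^{-1-\nu(\mathcal{Q})/n}$; this is the content of Proposition~\ref{sousensbiendistribue} (which regularises the counting function of $\mathcal{Q}$) and Lemma~\ref{lemmeconstruction}, and nothing in your sketch plays this role. Incidentally, the phrase ``lacunary subsequence whose density still reflects the exponent of convergence'' is self--contradictory, since a lacunary sequence has exponent of convergence $0$.

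A second, smaller gap: you never require the fractions $\bm{p}'/q_{k+1}$ to be in lowest terms. If $\bm{p}'/q_{k+1}$ reduces to $\bm{p}''/q''$ with $q''\notin\mathcal{Q}$, then $C_{\tau}\!\left(\frac{\bm{p}'}{q_{k+1}}\right)\subset C_{\tau}\!\left(\frac{\bm{p}''}{q''}\right)$ and every limit point passing through that child acquires a forbidden $\tau$--approximation; occurring at infinitely many levels, this expels the point from $W^*_{\tau,n}(\mathcal{Q})$. Your deletion step catches such $q''$ only when they fall in the current range, and in any case this interacts with the survivor count. The paper resolves this by restricting to \emph{absolutely $q$--primitive} numerators, counted via Lemma~\ref{distrinbscopremiers}; and it is precisely the weakness of that lemma on short intervals (estimate~(\ref{estimfaible})), together with the crude summation over all intermediate denominators in Proposition~\ref{originedelacontrainte}, that forces the threshold $\tau>2+1/n$ --- not the bare volume computation you describe, whose convergence indeed only needs $\tau>1+1/n$.
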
 
Thus when $\tau > 2+1/n$, the limsup set $W_{\tau, n}(\mathcal{Q})$ and the associated liminf set $W^*_{\tau, n}(\mathcal{Q})$ actually share the same Hausdorff dimension. This leaves a gap corresponding to the case where $\tau$ lies in the interval $(1+\nu(\mathcal{Q})/n \, ,\, 2+1/n]$. The nature of this restriction shall clearly appear in the course of the proof and shall then be discussed. It is however worth mentioning at this stage that the underlying difficulty does not seem easy to overcome and may be linked to some deep problems in the metric theory of numbers.

\paragraph{\textbf{\large{Notation}}\\}
In addition to those already introduced, the following pieces of notation shall be used throughout~:

\begin{itemize}
\item $x \ll y$ (resp. $x\gg y$, where $x,y\in \R$)~: there exists a constant $c>0$ such that $x\le cy$ (resp. $x\ge cy$).
\item $x\asymp y$ ($x,y\in \R$) means both $x \ll y$ and $x\gg y$.
\item $ \llbracket x , y \rrbracket$ ($x, y\in\R$, $x \le y$)~: interval of integers, i.e. $\llbracket x , y \rrbracket = \left\{ n\in\Z \; : \; x\le n \le y\right\}$.
\item $\lambda_n$~: the $n$--dimensional Lebesgue measure (for simplicity, $\lambda := \lambda_1$). 
\item $\# X$~: the cardinality of a finite set $X$.
\item $|U|$~: the diameter of a bounded set $U\subset \R^n$.
\item $\delta_n\left(\mathcal{S}\right):=\#\left(\mathcal{S}\cap \llbracket 1 , n \rrbracket  \right)$ for any subset $\mathcal{S}\subset\N$.
\item $I_{\tau}\!\!\left(\frac{p}{q}\right):=\left(\frac{p}{q}-\frac{1}{q^{\tau}}\, ,\, \frac{p}{q}+\frac{1}{q^{\tau}}\right)$, where $\frac{p}{q}\in\Q$.
\item $C_{\tau}\!\!\left(\frac{\bm{p}}{q}\right):=\prod_{i=1}^{n}I_{\tau}\!\!\left(\frac{p_i}{q}\right)$, where $\bm{p}=(p_1,\dots,p_n)\in\Z^n$ and $q\in\N$ (note that with this convention, $C_{\tau}\!\!\left(\frac{2\bm{p}}{2q}\right)$ is strictly contained in $C_{\tau}\!\!\left(\frac{\bm{p}}{q}\right)$).
\end{itemize}

\section{Auxiliary lemmas}

In this section, $\mathcal{S}$ denotes an arbitrary infinite set of natural numbers.

\subsection{On the logarithmic density of a subset of integers}

As is well--known, the exponent of convergence, as defined by~(\ref{expconvgce}), of the set $S$ is related to its logarithmic density in the following way (see for instance~\cite{hardyriesz} for a proof)~: 
\begin{equation}\label{expconvlogdens}
\nu(\mathcal{S})\, = \, \limsup_{n\rightarrow +\infty}\left(\frac{\log \delta_n\left(\mathcal{S}\right)}{\log n}\right).
\end{equation}
The next lemma provides a similar formula for $\nu(\mathcal{S})$.

\begin{lem}\label{autreversiondensitelogexpconv}
The following equation holds~: $$\nu(\mathcal{S})\, = \, \limsup_{n\rightarrow +\infty}\left(\frac{\log \left( \delta_{2n}\left(\mathcal{S}\right)-\delta_{n}\left(\mathcal{S}\right)\right)}{\log n}\right).$$
\end{lem}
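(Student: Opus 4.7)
The plan is to prove the equality by showing two inequalities between
\[
\alpha := \limsup_{n\to\infty}\frac{\log \delta_n(\mathcal{S})}{\log n} \qquad\text{and}\qquad \beta := \limsup_{n\to\infty}\frac{\log\!\left(\delta_{2n}(\mathcal{S})-\delta_n(\mathcal{S})\right)}{\log n},
\]
and invoking identity~(\ref{expconvlogdens}) to identify $\alpha$ with $\nu(\mathcal{S})$.

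For the direction $\beta \le \alpha$, I would simply observe that $\delta_{2n}(\mathcal{S}) - \delta_n(\mathcal{S}) \le \delta_{2n}(\mathcal{S})$. Taking logarithms and using $\log(2n)/\log n \to 1$, one gets $\limsup_{n\to\infty}\log\delta_{2n}(\mathcal{S})/\log n = \limsup_{n\to\infty}\log\delta_{2n}(\mathcal{S})/\log(2n) = \alpha$, from which $\beta \le \alpha$ follows.

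The reverse inequality $\alpha \le \beta$ is the substantive step. Here I would argue by contradiction: suppose $\beta < \alpha$ and fix some $\gamma$ with $\beta < \gamma < \alpha$. By definition of $\beta$, there exists $n_0$ such that for every $n \ge n_0$ one has $\delta_{2n}(\mathcal{S}) - \delta_n(\mathcal{S}) \le n^{\gamma}$. Applying this inequality iteratively along the geometric sequence $n_0, 2n_0, 4n_0, \dots$ yields, for every integer $k \ge 0$,
\[
\delta_{2^{k}n_0}(\mathcal{S}) \;\le\; \delta_{n_0}(\mathcal{S}) \;+\; \sum_{j=0}^{k-1}\bigl(2^{j}n_0\bigr)^{\gamma} \;\le\; \delta_{n_0}(\mathcal{S}) \;+\; \frac{(2^{k}n_0)^{\gamma}}{1-2^{-\gamma}}.
\]
For an arbitrary $N \ge n_0$, choosing $k$ so that $2^{k}n_0 \le N < 2^{k+1}n_0$ and using the monotonicity of $\delta$ gives $\delta_N(\mathcal{S}) \ll N^{\gamma}$. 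Hence $\alpha = \limsup_{N\to\infty}\log\delta_N(\mathcal{S})/\log N \le \gamma$, contradicting the choice $\gamma < \alpha$.

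I do not anticipate any serious obstacle: both directions reduce to elementary manipulations of $\limsup$, with the only mildly delicate point being the geometric iteration used to transfer a bound on the difference $\delta_{2n}-\delta_n$ into a bound on $\delta_N$ itself. The argument works uniformly for any infinite $\mathcal{S}\subset\N$ and does not require additional structural assumptions on the set.
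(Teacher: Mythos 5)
Your proof is correct, and it takes a genuinely different route from the paper's for the substantive inequality $\nu(\mathcal{S})\le\limsup_n \log(\delta_{2n}-\delta_n)/\log n$. The paper argues directly: it picks a subsequence $(n_k)$ realising $\log\delta_{n_k}\sim\nu(\mathcal{S})\log n_k$, partitions $\llbracket 2,n_k\rrbracket$ into $O(\log n_k)$ dyadic blocks, and uses the pigeonhole principle to extract one block $\llbracket l_k+1,2l_k\rrbracket$ carrying at least $(\delta_{n_k}-1)/(u_k+1)$ elements; the loss of the factor $u_k+1$ is harmless on the logarithmic scale. You instead argue by contraposition: if $\delta_{2n}-\delta_n\le n^{\gamma}$ for all $n\ge n_0$, then telescoping along the geometric sequence $2^{j}n_0$ and summing the resulting geometric series forces $\delta_N\ll N^{\gamma}$, contradicting $\gamma<\nu(\mathcal{S})$. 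Both arguments are elementary and rest on the same dyadic decomposition, but the mechanisms are dual (pigeonhole versus summation), and yours is arguably cleaner since it avoids introducing the auxiliary subsequence and tracking the $\log(u_k+1)$ correction. Two small points you should make explicit: the geometric-series bound $\sum_{j<k}(2^{j}n_0)^{\gamma}\ll(2^{k}n_0)^{\gamma}$ requires $\gamma>0$, which holds because $\mathcal{S}$ is infinite, so $\delta_{2n}-\delta_n\ge1$ infinitely often and hence $\beta\ge0$ (this also disposes of the case $\nu(\mathcal{S})=0$, exactly as in the paper); and in the easy direction one only needs $\limsup_n\log\delta_{2n}/\log(2n)\le\alpha$, which is immediate since $(2n)$ is a subsequence.
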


\begin{proof}
First note that, for $n\in\N$, $$\frac{\log \left( \delta_{2n}\left(\mathcal{S}\right)-\delta_{n}\left(\mathcal{S}\right)\right)}{\log n} \; \le \; \frac{\log \delta_{2n}\left(\mathcal{S}\right)}{\log n} \; \underset{n\rightarrow +\infty}{\sim}  \; \frac{\log \delta_{2n}\left(\mathcal{S}\right)}{\log 2n}\cdotp$$
Taking the limsup on both sides of this inequality, it is easily seen that~(\ref{expconvlogdens}) implies $$\limsup_{n\rightarrow +\infty} \left(\frac{\log \left( \delta_{2n}\left(\mathcal{S}\right)-\delta_{n}\left(\mathcal{S}\right)\right)}{\log n} \right) \; \le \; \nu(\mathcal{S}).$$
This suffices to prove the result in the case $\nu(\mathcal{S})=0$ since, the set $\mathcal{S}$ being infinite, $\delta_{2n}\left(\mathcal{S}\right)-\delta_{n}\left(\mathcal{S}\right) \ge 1$ for infinitely many $n\in\N$. Therefore, assume from now on that $\nu(\mathcal{S})>0$. Then~(\ref{expconvlogdens}) shows the existence of a sequence $\left(n_k\right)_{k\ge 0}$ of positive integers such that 
\begin{equation}\label{asymlogdensity}
\log \delta_{n_k}\left(\mathcal{S}\right) \; \underset{n\rightarrow +\infty}{\sim} \; \nu\left( \mathcal{S}\right)\, \log n_k.
\end{equation}
For a fixed $k\in\N$, consider the following partition of the interval $\llbracket 2 , n_k \rrbracket$ into $u_k:=\lfloor \log n_k/\log 2\rfloor$ subintervals~: $$\left\llbracket 2 , n_k \right\rrbracket = \bigcup_{r=0}^{u_k}\bigg\llbracket \frac{n_k}{2^{r+1}}+1 , \frac{n_k}{2^r} \bigg\rrbracket.$$
From the definition of the integer $\delta_{n_k}\left(\mathcal{S}\right)$, at least one of these intervals contains more than $(\delta_{n_k}\left(\mathcal{S}\right)-1)/(u_k+1)$ elements of $\mathcal{S}$, which determines a rational number $l_k$ of the form $n_k/2^{a+1}$ ($0\le a \le u_k -1$) such that 
\begin{align}\label{logsub1}
\frac{\delta_{n_k}\left(\mathcal{S}\right)-1}{u_k+1} \; &\le \; 2l_k-(l_k+1)+1=l_k \; \le \; \frac{n_k}{2} \quad \textrm{ and } \quad \frac{\delta_{n_k}\left(\mathcal{S}\right)-1}{u_k+1} \; &\le \; \delta_{2l_k}\left(\mathcal{S} \right)-\delta_{l_k}\left(\mathcal{S} \right).
\end{align}
From~(\ref{asymlogdensity}) and from the definition of $u_k$, one deduces on the one hand that the first inequality in~(\ref{logsub1}) implies that the sequence $\left(l_k \right)_{k\ge1}$ tends to infinity and that, on the other, $$\frac{\log\left(\delta_{n_k}\left(\mathcal{S}\right)-1\right)-\log\left(u_k+1\right)}{\log n_k} \; \underset{n\rightarrow +\infty}{\sim}  \; \frac{\log \delta_{n_k}\left(\mathcal{S}\right)}{\log n_k} \; \underset{n\rightarrow +\infty}{\sim}  \nu(\mathcal{S}).$$ Furthermore, it follows from~(\ref{logsub1}) that $$\frac{\log\left(\delta_{n_k}\left(\mathcal{S}\right)-1\right)-\log\left(u_k+1\right)}{\log n_k} \; \le \; \frac{\log\left(\delta_{2l_k}\left(\mathcal{S}\right)-\delta_{l_k}\left(\mathcal{S}\right)\right)}{\log l_k}\cdotp$$ Combining these last two inequalities leads to the relationship $$\nu\left( \mathcal{S}\right)\; \le \; \limsup_{n\rightarrow +\infty} \left(\frac{\log\left(\delta_{2l_k}\left(\mathcal{S}\right)-\delta_{l_k}\left(\mathcal{S}\right)\right)}{\log l_k}\right),$$ which completes the proof.
\end{proof}

One key--step in the proof of Theorem~\ref{thmprinciliminf} is to approach an infinite set of positive integers by arbitrarily large subsets, the size of a subset being measured by its exponent of convergence. In this respect, the following proposition will turn out to be very useful.

\begin{prop}\label{sousensbiendistribue}
Assume that $\nu\left(\mathcal{S}\right)>0$ and let $\nu\in \left(0 , \nu\left(\mathcal{S}\right)\right)$. Furthermore, let $\left(\alpha_n\right)_{n\ge 0}$ be a sequence of positive reals such that the sequence $\left(n^{\nu}\alpha_n\right)_{n\ge 0}$ is increasing and such that $\left(\log \alpha_n / \log n\right)_{n\ge 2}$ tends to 0 as $n$ goes to infinity.

Then, there exists a subset $\mathcal{S}_{\nu}\subset \mathcal{S}$ such that~:
\begin{itemize}
\item for all $n\ge 1$, $\delta_{2n}\left(\mathcal{S}_{\nu}\right)-\delta_n\left(\mathcal{S}_{\nu}\right) \le n^{\nu}\alpha_n.$
\item there exists a strictly increasing sequence of positive integers $\left(n_k\right)_{k\ge 0}$ satisfying $$\delta_{2n_k}\left(\mathcal{S}_{\nu}\right)-\delta_{n_k}\left(\mathcal{S}_{\nu}\right) \; \underset{k\rightarrow +\infty}{\sim} \; n_k^{\nu}\,\alpha_{n_k}.$$
\end{itemize}
In particular, $\nu\left(\mathcal{S}_{\nu}\right) = \nu$.
\end{prop}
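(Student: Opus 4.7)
My plan is to construct $\mathcal{S}_{\nu}$ by a greedy sweep of $\mathcal{S}$: enumerate the elements of $\mathcal{S}$ in increasing order and admit $m \in \mathcal{S}$ into $\mathcal{S}_{\nu}$ if and only if, after the tentative inclusion, one still has $\#(\mathcal{S}_{\nu}\cap (n,m]) \le n^{\nu}\alpha_n$ for every integer $n \in \llbracket \lceil m/2 \rceil , m-1 \rrbracket$. The first bullet of the proposition is then built into the construction: given any $n \ge 1$, applying the defining rule at the largest element of $\mathcal{S}_{\nu}\cap (n,2n]$ (if any) directly gives $\delta_{2n}(\mathcal{S}_{\nu})-\delta_n(\mathcal{S}_{\nu}) \le n^{\nu}\alpha_n$. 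The finitely many small values of $n$ where $n^{\nu}\alpha_n<1$ only cause a finite initial segment of $\mathcal{S}$ to be discarded and are therefore harmless.

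For the second bullet I would invoke Lemma~\ref{autreversiondensitelogexpconv} to produce a sequence $m_k \to \infty$ with $\delta_{2m_k}(\mathcal{S})-\delta_{m_k}(\mathcal{S})=m_k^{\nu(\mathcal{S})+o(1)}$. Since $\nu<\nu(\mathcal{S})$ and $\log\alpha_n/\log n \to 0$ (so $\alpha_n = n^{o(1)}$), this count grows strictly faster than the cap $m_k^{\nu}\alpha_{m_k}$. For every $s \in \mathcal{S}\cap (m_k, 2m_k]$ the integer $m_k$ belongs to the testing window $\llbracket \lceil s/2 \rceil , s-1\rrbracket$, so the constraint at $n = m_k$ prevents more than $\lfloor m_k^{\nu}\alpha_{m_k}\rfloor$ of these $s$'s from being admitted. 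For all large $k$ at least one $s_k \in \mathcal{S}\cap (m_k, 2m_k]$ is therefore rejected, and its rejection exhibits a witness $n_k \in \llbracket \lceil s_k/2 \rceil , s_k -1\rrbracket$ at which
\[
\#\bigl( \mathcal{S}_{\nu} \cap (n_k,\, s_k - 1]\bigr) \; = \; \lfloor n_k^{\nu}\alpha_{n_k}\rfloor.
\]
Since $s_k - 1 < 2n_k$, combining this equality with the upper bound from the first bullet forces $\delta_{2n_k}(\mathcal{S}_{\nu}) - \delta_{n_k}(\mathcal{S}_{\nu}) = \lfloor n_k^{\nu}\alpha_{n_k}\rfloor$.

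The sequence $(n_k)$ satisfies $n_k \ge m_k/2 \to \infty$, and $n_k^{\nu}\alpha_{n_k} = n_k^{\nu + o(1)} \to \infty$ (as $\nu>0$), so $\lfloor n_k^{\nu}\alpha_{n_k}\rfloor \sim n_k^{\nu}\alpha_{n_k}$. Passing to a strictly increasing subsequence then yields the asymptotic in the second bullet. The concluding equality $\nu(\mathcal{S}_{\nu}) = \nu$ follows at once from Lemma~\ref{autreversiondensitelogexpconv}: the first bullet supplies $\nu(\mathcal{S}_{\nu}) \le \nu$, and the asymptotic just established supplies the matching lower bound.

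The subtle point, in my view, is the saturation step in the second paragraph: the greedy rejection a priori only furnishes a lower bound on $\#(\mathcal{S}_{\nu}\cap (n_k, s_k-1])$, and converting this into an exact equality on $\#(\mathcal{S}_{\nu}\cap (n_k, 2n_k])$ requires both the built-in upper bound from the first bullet and the inequality $s_k \le 2n_k$ inherited from $n_k \ge \lceil s_k/2\rceil$. Once this pivotal step is in place, everything else is routine.
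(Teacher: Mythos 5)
Your proof is correct, and it takes a genuinely different route from the paper's, even though both are deletion arguments resting on Lemma~\ref{autreversiondensitelogexpconv} at the same two places (to locate the overflow scales $m_k$, and to convert the two bullets into $\nu(\mathcal{S}_{\nu})=\nu$). The paper proceeds by staged surgery: at each step it finds the \emph{smallest} $n$ at which the dyadic count $\delta_{2n}-\delta_{n}$ overflows the cap $\lfloor n^{\nu}\alpha_{n}\rfloor$, removes exactly the excess from $\llbracket n+1,2n\rrbracket$, and must then justify that this removal is feasible when consecutive overflow windows overlap ($n_{k}<n_{k+1}<2n_{k}$) --- this is the displayed contradiction computation using the monotonicity of $(n^{\nu}\alpha_{n})$, and the saturation $\delta_{2n_k}-\delta_{n_k}=\lfloor n_k^{\nu}\alpha_{n_k}\rfloor$ is built in by fiat. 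Your one-pass greedy rule instead enforces every window constraint locally at admission time, which makes the first bullet automatic (via the maximal element of $\mathcal{S}_{\nu}\cap(n,2n]$, whose testing window contains $n$) and sidesteps the overlapping-window bookkeeping entirely; the price is that saturation is no longer immediate and must be recovered by the pigeonhole at the scales $m_k$ together with the rejection-witness/squeeze step you rightly single out as the crux (the rejection gives $\ge\lfloor n_k^{\nu}\alpha_{n_k}\rfloor$ on $(n_k,s_k-1]\subset(n_k,2n_k]$, the first bullet gives $\le n_k^{\nu}\alpha_{n_k}$ on the larger interval, and integrality closes the gap). Both arguments then finish identically, using $n^{\nu}\alpha_{n}=n^{\nu+o(1)}\to\infty$ to get $\lfloor n_k^{\nu}\alpha_{n_k}\rfloor\sim n_k^{\nu}\alpha_{n_k}$. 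In short: the paper's version gives tighter control over which elements are discarded, while yours is shorter and trades the feasibility lemma for a slightly delicate witness argument; both are sound.
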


\begin{proof}
The fact that $\nu\left(\mathcal{S}_{\nu}\right) = \nu$ follows immediately from Lemma~\ref{autreversiondensitelogexpconv}. Note that this lemma applied to the set $\mathcal{S}$ amounts to claiming the existence of a sequence of real numbers $\left(\beta_n\right)_{n\ge 0}$ tending to zero and of a strictly increasing sequence of positive integers $\left(p_k\right)_{k\ge 0}$ satisfying
\begin{align}\label{memequavantpourcettedemo}
\delta_{2n}\left(\mathcal{S}\right)-\delta_n\left(\mathcal{S}\right) \le n^{\nu\left(\mathcal{S}\right)+\beta_n} \,\textrm{ for all } n\in\N \,\textrm{ and }\, \delta_{2p_k}\left(\mathcal{S}\right)-\delta_{p_k}\left(\mathcal{S}\right) \; \underset{k\rightarrow +\infty}{\sim} \; p_k^{\nu\left(\mathcal{S}\right)+\beta_{p_k}}.  
\end{align}
Note also that the assumption that $\log \alpha_n / \log n$ tends to zero amounts to the fact that $\alpha_n = o\left(n^{\epsilon}\right)$ for all $\epsilon >0$. Thus, the second relationship in~(\ref{memequavantpourcettedemo}) and the fact that $\nu<\nu\left(\mathcal{S}\right)$ guarantee the existence of a smallest positive integer $n_1$ such that $$\lfloor n_1^{\nu}\,\alpha_{n_1}\rfloor \; < \; \delta_{2n_1}\left(\mathcal{S}\right)-\delta_{n_1}\left(\mathcal{S}\right):= r_1.$$
Now remove $r_1-\lfloor n_1^{\nu}\,\alpha_{n_1}\rfloor$ elements of $\mathcal{S}$ from the interval $\llbracket n_1+1 , 2n_1 \rrbracket$ to define a subset $\mathcal{S}_{\nu}^{(1)}\subset \mathcal{S}$ satisfying the following properties~:
\begin{itemize}
\item $\mathcal{S}_{\nu}^{(1)}$ and $\mathcal{S}$ coincide on the intervals $\llbracket 1 , n_1 \rrbracket$ and $\N\backslash \llbracket 1 , 2n_1 \rrbracket$,
\item for all $n \in \llbracket 1 , n_1 \rrbracket$, $\delta_{2n}\left(\mathcal{S}_{\nu}^{(1)}\right)-\delta_{n}\left(\mathcal{S}_{\nu}^{(1)}\right) \; \le \; n^{\nu} \alpha_{n},$
\item $\delta_{2n_1}\left(\mathcal{S}_{\nu}^{(1)}\right)-\delta_{n_1}\left(\mathcal{S}_{\nu}^{(1)}\right) \, = \, \lfloor n_1^{\nu}\, \alpha_{n_1} \rfloor$.
\end{itemize}

Consider then the smallest integer $n_2 > n_1$ such that $$\lfloor n_2^{\nu}\,\alpha_{n_2}\rfloor \; < \; \delta_{2n_2}\left(\mathcal{S}_{\nu}^{(1)}\right)-\delta_{n_2}\left(\mathcal{S}_{\nu}^{(1)}\right):= r_2.$$ Since for $n\ge 2n_1+1$, $\delta_{2n}\left(\mathcal{S}_{\nu}^{(1)}\right)-\delta_{n}\left(\mathcal{S}_{\nu}^{(1)}\right) = \delta_{2n}\left(\mathcal{S}\right)-\delta_{n}\left(\mathcal{S}\right),$ the existence of $n_2$ is guaranteed in the same way as for $n_1$. 

Defining $u_2:= \max\left\{n_2, 2n_1 \right\}$, remove $r_2-\lfloor n_2^{\nu}\,\alpha_{n_2}\rfloor$ elements of $\mathcal{S}$ from the interval $\llbracket u_1+1 , 2n_2 \rrbracket$. This is clearly possible if $n_2\ge 2n_1$ as there is no overlap in this case between the intervals $\llbracket n_1 , 2n_1 \rrbracket$ and $\llbracket u_1+1 , 2n_2 \rrbracket$. But this is also possible if $n_1 < n_2 < 2n_1$~: indeed, if the interval $\llbracket u_1+1 , 2n_2 \rrbracket = \llbracket 2n_1+1 , 2n_2 \rrbracket$ contained strictly less than $r_2-\lfloor n_2^{\nu}\,\alpha_{n_2}\rfloor$ elements, one would have~:
\begin{align*}
r_2 := \delta_{2n_2}\left(\mathcal{S}_{\nu}^{(1)}\right)-\delta_{n_2}\left(\mathcal{S}_{\nu}^{(1)}\right) &= \delta_{2n_2}\left(\mathcal{S}_{\nu}^{(1)}\right)-\delta_{2n_1}\left(\mathcal{S}_{\nu}^{(1)}\right)+\delta_{2n_1}\left(\mathcal{S}_{\nu}^{(1)}\right)-\delta_{n_2}\left(\mathcal{S}_{\nu}^{(1)}\right)\\
&= \delta_{2n_2}\left(\mathcal{S}\right)-\delta_{2n_1}\left(\mathcal{S}\right)+\delta_{2n_1}\left(\mathcal{S}_{\nu}^{(1)}\right)-\delta_{n_2}\left(\mathcal{S}_{\nu}^{(1)}\right) \\
& \qquad\qquad \left(\textrm{as  } \mathcal{S}_{\nu}^{(1)}\cap \left\{ n\ge 2n_1+1\right\}= \mathcal{S}\cap \left\{ n\ge 2n_1+1\right\}\right)\\
&\le \delta_{2n_2}\left(\mathcal{S}\right)-\delta_{2n_1}\left(\mathcal{S}\right)+\delta_{2n_1}\left(\mathcal{S}_{\nu}^{(1)}\right)-\delta_{n_1}\left(\mathcal{S}_{\nu}^{(1)}\right) \\
& < r_2-\lfloor n_2^{\nu}\,\alpha_{n_2}\rfloor + \lfloor n_1^{\nu}\,\alpha_{n_1}\rfloor\\
& \le r_2
\end{align*}
since the sequence $\left(n^{\nu}\alpha_n\right)_{n\ge 0}$ is increasing. This contradiction shows that one can find a subset $\mathcal{S}_{\nu}^{(2)}\subset \mathcal{S}_{\nu}^{(1)}$ such that~:
\begin{itemize}
\item $\mathcal{S}_{\nu}^{(1)}$ and $\mathcal{S}_{\nu}^{(2)}$ coincide on the intervals $\llbracket 1 , n_2 \rrbracket$ and $\N\backslash \llbracket 1 , 2n_2 \rrbracket$,
\item for all $n \in \llbracket 1 , n_2 \rrbracket$, $\delta_{2n}\left(\mathcal{S}_{\nu}^{(2)}\right)-\delta_{n}\left(\mathcal{S}_{\nu}^{(2)}\right) \; \le \; n^{\nu} \alpha_{n},$
\item $\delta_{2n_2}\left(\mathcal{S}_{\nu}^{(2)}\right)-\delta_{n_2}\left(\mathcal{S}_{\nu}^{(2)}\right) \, = \, \lfloor n_2^{\nu}\, \alpha_{n_2} \rfloor$.
\end{itemize}

By induction, one can thus construct a decreasing sequence $\left(\mathcal{S}_{\nu}^{(k)}\right)_{k\ge 1}$ of subsets of $\mathcal{S}$ and a strictly increasing sequence of natural integers $\left(n_k\right)_{k\ge 1}$ such that, for all $k\ge 2$, 
\begin{itemize}
\item $\mathcal{S}_{\nu}^{(k-1)}$ and $\mathcal{S}_{\nu}^{(k)}$ coincide on $\llbracket 1 , n_k \rrbracket$ and $\N\backslash \llbracket 1 , 2n_k \rrbracket$,
\item for all $n \in \llbracket 1 , n_k \rrbracket$, $\delta_{2n}\left(\mathcal{S}_{\nu}^{(k)}\right)-\delta_{n}\left(\mathcal{S}_{\nu}^{(k)}\right) \; \le \; n^{\nu} \alpha_{n},$
\item $\delta_{2n_k}\left(\mathcal{S}_{\nu}^{(k)}\right)-\delta_{n_k}\left(\mathcal{S}_{\nu}^{(k)}\right) \, = \, \lfloor n_k^{\nu}\, \alpha_{n_k} \rfloor$.
\end{itemize}
By construction, the set $\mathcal{S}_{\nu} := \cap_{k=1}^{+\infty} \mathcal{S}_{\nu}^{(k)}$ satisfies the conclusion of the proposition.
\end{proof}

\subsection{Steps to the construction of a Cantor set}

Theorem~\ref{thmprinciliminf} will be proved by exhibiting nice Cantor sets contained in the liminf set under consideration. To this end, a few auxiliary results are gathered in this subsection. They are preceded by two definitions which shall be used throughout this paper.

\begin{defi}\label{vecteurprimitif}
A vector $\bm{p}=(p_1, \dots, p_n)\in\Z^n$ is $q$\emph{--primitive} (where $q\in\N$) if at least one of the components $p_i$ of $\bm{p}$ is coprime to $q$. The vector $\bm{p}$ is \emph{absolutely} $q$\emph{--primitive} if all its components are coprime to $q$.
\end{defi}

\begin{defi}
Given $\tau >1$, $\bm{p_0}\in\Z^n$ and $q_0\in\N$, a \emph{hypercube of new generation in $C_{\tau}\!\!\left(\frac{\bm{p_0}}{q_0}\right)$} is a hypercube of the form $C_{\tau}\!\!\left(\frac{\bm{p}}{q}\right)$ contained in $C_{\tau}\!\!\left(\frac{\bm{p_0}}{q_0}\right)$ such that $\bm{p}\in\Z^n$ is absolutely $q$--primitive ($q\in\N$) and such that for any $q_1\in \llbracket q_0+1 , q-1 \rrbracket$ and any $\bm{p_1}\in\Z^n$, $$C_{\tau}\!\!\left(\frac{\bm{p_1}}{q_1}\right) \cap C_{\tau}\!\!\left(\frac{\bm{p}}{q}\right) = \emptyset .$$
\end{defi}

Thus, the concept of a hypercube of new generation renders the idea that such a polytope covers a volume inside a given hypercube which has been covered by no other. The next proposition counts the number of such hypercubes and constitutes a problem specific to the liminf setup in Diophantine approximation. It is preceded by a well--known lemma on the repartition of integers coprime to a given natural number.

\begin{lem}\label{distrinbscopremiers}
Let $q$ be a positive integer and $\eta$ be any positive real number. Denote by $\varphi_{\eta}(q)$ the number of integers less than $\eta q$ and coprime to $q$. Then, for any $\epsilon >0$, $$\varphi_{\eta}(q) = \varphi(q) \left(\eta + o\left(q^{-1+\epsilon}\right)\right),$$ where $\varphi$ denotes Euler's totient function.

In particular, if $\epsilon\in (0,1)$, $\eta > q^{-1+\epsilon}$ and $q$ is large enough, then for any $\gamma \ge 0$,$$\# \left\{\, p\in\llbracket \gamma q , (\gamma +\eta)q \rrbracket \; : \; \gcd(p,q)=1 \, \right\} \;\asymp \; \eta \varphi(q),$$ where the implicit constants depend only on $\epsilon$.
\end{lem}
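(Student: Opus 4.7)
The plan is to establish the first assertion via Möbius inversion over the divisors of $q$, and then derive the second assertion by applying the first one to the endpoints of the shifted interval and subtracting. Concretely, the indicator of coprimality to $q$ can be written as $\mathbf{1}_{\gcd(p,q)=1} = \sum_{d \mid \gcd(p,q)} \mu(d)$, so exchanging the order of summation yields
\begin{equation*}
\varphi_\eta(q) \; = \; \sum_{\substack{1 \le p \le \eta q \\ \gcd(p,q)=1}} 1 \; = \; \sum_{d \mid q} \mu(d) \left\lfloor \frac{\eta q}{d} \right\rfloor.
\end{equation*}

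The next step is to replace each floor function by its argument at the cost of an $O(1)$ error. Using the classical identity $\varphi(q) = q \sum_{d \mid q} \mu(d)/d$ and the crude bound $|\mu(d)| \le 1$, one obtains
\begin{equation*}
\varphi_\eta(q) \; = \; \eta q \sum_{d \mid q} \frac{\mu(d)}{d} \; + \; O\!\left(\sum_{d \mid q} 1\right) \; = \; \eta\, \varphi(q) \; + \; O\!\left(\tau(q)\right),
\end{equation*}
where $\tau(q)$ denotes the number of divisors of $q$. Since $\tau(q) = O(q^{\epsilon/2})$ for any $\epsilon > 0$ and since $\varphi(q) \gg q / \log\log q \gg q^{1-\epsilon/2}$ for $q$ large enough (by Mertens' theorem), dividing by $\varphi(q)$ yields
\begin{equation*}
\varphi_\eta(q) \; = \; \varphi(q) \left(\eta + O(q^{-1+\epsilon})\right),
\end{equation*}
which gives the desired asymptotic (the $O$ being even slightly stronger than the $o$ claimed in the statement since $\epsilon > 0$ is arbitrary).

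For the second assertion, I would express the count of interest as a difference of two quantities of the previous form. Indeed, up to a bounded boundary contribution due to the endpoints being possibly integers,
\begin{equation*}
\#\left\{ p \in \llbracket \gamma q, (\gamma + \eta) q \rrbracket : \gcd(p,q) = 1 \right\} \; = \; \varphi_{\gamma + \eta}(q) - \varphi_{\gamma}(q) + O(1).
\end{equation*}
Applying the first part to both terms, the contributions involving $\gamma$ cancel in the main term and one is left with $\eta\, \varphi(q) + O\!\left(\varphi(q)\, q^{-1+\epsilon}\right) = \eta\,\varphi(q) + O(q^{\epsilon})$. Under the assumption $\eta > q^{-1+\epsilon}$, the main term $\eta\, \varphi(q)$ dominates the error since $\eta\, \varphi(q) \gg q^{\epsilon} \cdot q^{-\epsilon/2} = q^{\epsilon/2}$ (after a harmless rescaling of $\epsilon$), which yields the claimed $\asymp$ estimate with implicit constants depending only on $\epsilon$.

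The arguments are essentially routine once the Möbius inversion is in place; the only subtle point to check is that the lower bound for $\varphi(q)$ is strong enough to absorb the divisor-function error in the regime $\eta > q^{-1+\epsilon}$, which is precisely why the condition on $\eta$ is imposed.
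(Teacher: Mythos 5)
Your proof is correct and follows exactly the route the paper intends: the paper's ``proof'' is only a one-line appeal to inclusion--exclusion (i.e.\ M\"obius inversion over the divisors of $q$) together with the standard bounds $\tau(q)\ll_\epsilon q^{\epsilon/2}$ and $\varphi(q)\gg q/\log\log q$, with a reference to Lemma~III of Duffin--Schaeffer for the details you have written out. The only point worth making explicit is the one you already flag: to make $\eta\varphi(q)$ dominate the $O(\tau(q))$ error under the hypothesis $\eta>q^{-1+\epsilon}$, you must invoke the first part with a strictly smaller parameter (say $\epsilon/2$), which is the harmless rescaling you mention.
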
 

\begin{proof}
This follows easily from the inclusion--exclusion principle and some standard estimates of arithmetical functions. See for instance Lemma~III of~\cite{dufsch} for details.
\end{proof}

\begin{prop}\label{originedelacontrainte}
Let $\tau>2+1/n$, $\bm{p_0}\in\Z^n$ and $q_0\in\N$. Assume that $C_{\tau}\!\!\left(\frac{\bm{p_0}}{q_0}\right)\subset (0,1)^n$ and that $q>q_0^{\tau^3}$ has been chosen large enough. Denote furthermore by $\mathcal{N}\left(q,\frac{\bm{p_0}}{q_0}, \tau \right)$ the cardinality of the set of hypercubes of new generation in $C_{\tau}\!\!\left(\frac{\bm{p_0}}{q_0}\right)$ of the form $C_{\tau}\!\!\left(\frac{\bm{p}}{q}\right)$ for some $\bm{p}\in\Z^n$.

Then, provided that $q_0$ is larger than some constant (independent of $q$), $$\mathcal{N}\left(q,\frac{\bm{p_0}}{q_0}, \tau \right)\; \ge \; \frac{\varphi(q)^n\, \lambda_n\!\left(C_{\tau}\!\!\left(\frac{\bm{p_0}}{q_0}\right)\right)}{2^{n+1}}\cdotp$$
\end{prop}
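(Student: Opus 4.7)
The plan is to bound below the number of $\bm{p}\in\Z^n$ that are absolutely $q$-primitive and satisfy $C_\tau(\bm{p}/q)\subset C_\tau(\bm{p_0}/q_0)$ (the \emph{candidates}), then to subtract an upper bound on the subset of these candidates for which $C_\tau(\bm{p}/q)\cap C_\tau(\bm{p_1}/q_1)\neq\emptyset$ for some $q_1\in\llbracket q_0+1,q-1\rrbracket$ and some $\bm{p_1}\in\Z^n$ (the \emph{forbidden} candidates). The second part of Lemma~\ref{distrinbscopremiers}, applied in each coordinate to a segment of length $\asymp q/q_0^\tau$ (which is well above the threshold $q^\epsilon$ thanks to $q>q_0^{\tau^3}$), gives a candidate count comparable to $\varphi(q)^n\lambda_n(C_\tau(\bm{p_0}/q_0))$, with a multiplicative error that tends to $1$ as $q_0\to\infty$.

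To estimate the forbidden count, I would split the admissible pairs $(q_1,\bm{p_1})$ into two cases. Assuming without loss of generality that $\bm{p_0}$ is absolutely $q_0$-primitive (as it will be in the Cantor-set application of the proposition), \textbf{Case~A} gathers the pairs with $\bm{p_1}/q_1=\bm{p_0}/q_0$: these are precisely $(kq_0,k\bm{p_0})$ with $k\ge 2$, and the associated forbidden boxes are concentric shrinkings of $C_\tau(\bm{p_0}/q_0)$, so their union coincides with the largest one, corresponding to $k=2$. Applying Lemma~\ref{distrinbscopremiers} to that single box bounds the Case~A forbidden count by $\varphi(q)^n\lambda_n(C_\tau(\bm{p_0}/q_0))/2^{n\tau}$. \textbf{Case~B} gathers the pairs with $\bm{p_1}/q_1\neq\bm{p_0}/q_0$; combining the Liouville lower bound $\|\bm{p_0}/q_0-\bm{p_1}/q_1\|_\infty\ge 1/(q_0 q_1)$ with the fact that $\bm{p_1}/q_1$ must lie within $q_0^{-\tau}+q_1^{-\tau}$ of $\bm{p_0}/q_0$ forces $q_1\gg q_0^{\tau-1}$. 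Inside this surviving range I would split at the threshold $q^{1/\tau}$: for $q_1<q^{1/\tau}$, Lemma~\ref{distrinbscopremiers} still applies to each $q^{-\tau}$-enlargement $\tilde C_\tau(\bm{p_1}/q_1)$, giving $\ll (\varphi(q)/q_1^\tau)^n$ primitive points per $\bm{p_1}$; multiplying by the number $\ll (q_1/q_0^\tau+1)^n$ of valid $\bm{p_1}$ and summing the (convergent, since $n(\tau-1)>1$) series bounds this contribution by $\varphi(q)^n\lambda_n(C_\tau(\bm{p_0}/q_0))\cdot Cq_0^{-n\tau^2+(2n+1)\tau-1}$. For $q_1\ge q^{1/\tau}$, the enlargement is too small for Lemma~\ref{distrinbscopremiers}, so I would instead use the volumetric estimate that $\bigcup_{\bm{p_1}}\tilde C_\tau(\bm{p_1}/q_1)$ has $n$-dimensional density at most $(2q_1^{1-\tau})^n$; bounding the primitive count by $q^n$ times the total volume and summing over $q_1$ gives a contribution of order $\varphi(q)^n\lambda_n(C_\tau(\bm{p_0}/q_0))\cdot C(q/\varphi(q))^n q^{(1-n(\tau-1))/\tau}$.

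The hypothesis $\tau>2+1/n$ is exactly what makes the exponent $-n\tau^2+(2n+1)\tau-1$ strictly negative, since $2+1/n$ is the larger root of the polynomial $n\tau^2-(2n+1)\tau+1$; consequently the low-$q_1$ contribution of Case~B tends to $0$ as $q_0\to\infty$. The high-$q_1$ contribution tends to $0$ as $q\to\infty$ for any fixed $q_0$, because the exponent $(1-n(\tau-1))/\tau<-n/\tau$ dominates the very slow growth of $(q/\varphi(q))^n\ll(\log\log q)^n$. Combining the three estimates, the total forbidden count is at most $\varphi(q)^n\lambda_n(C_\tau(\bm{p_0}/q_0))\cdot(2^{-n\tau}+o(1))$ once $q_0$ and $q$ are large enough, so at least $\varphi(q)^n\lambda_n(C_\tau(\bm{p_0}/q_0))\cdot(1-2^{-n\tau}-o(1))\ge \varphi(q)^n\lambda_n(C_\tau(\bm{p_0}/q_0))/2^{n+1}$ candidates survive, which is the claimed lower bound. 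The main obstacle is the low-$q_1$ sum in Case~B: its critical exponent $-n\tau^2+(2n+1)\tau-1$ vanishes exactly at $\tau=2+1/n$, which is precisely why this value marks the boundary of applicability of the present approach.
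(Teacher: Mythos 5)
Your overall strategy (count absolutely $q$--primitive candidates, then subtract those whose hypercube meets some $C_{\tau}\!\left(\frac{\bm{p_1}}{q_1}\right)$ with $q_0<q_1<q$, after reducing to $q_1\gg q_0^{\tau-1}$) is the same as the paper's, and your Case~A and your low--$q_1$ range are sound. The genuine gap is in the high--$q_1$ range. There you bound the number of forbidden $\bm{p}$ by ``$q^n$ times the total volume'' of $\bigcup_{\bm{p_1}}\tilde C_{\tau}\!\left(\frac{\bm{p_1}}{q_1}\right)$. This is not a valid upper bound for the number of points of $\frac{1}{q}\Z^n$ in a union of boxes whose side length $\asymp q_1^{-\tau}$ can be far smaller than $1/q$ (the overlap condition only gives $q_1^{\tau-1}<2q$, so $q_1^{-\tau}$ can be as small as $\asymp q^{-\tau/(\tau-1)}\ll q^{-1}$): each such tiny box may still contain a lattice point, so you must add at least one per box, i.e.\ a term $\ll\left(\frac{q_1}{q_0^{\tau}}+1\right)^n$ per $q_1$. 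Summing that omitted term over $q_1\le(2q)^{1/(\tau-1)}$ produces a contribution $\asymp q_0^{-n\tau}q^{(n+1)/(\tau-1)}$, which is $o\!\left(\varphi(q)^n\lambda_n\!\left(C_{\tau}\!\left(\frac{\bm{p_0}}{q_0}\right)\right)\right)$ precisely when $\tau>2+1/n$ --- so the term you dropped is exactly the binding one, and as written your high--$q_1$ estimate does not establish the bound. The paper handles this range differently: it applies Lemma~\ref{distrinbscopremiers} with its error term $o\!\left(q^{-1+\epsilon}\right)$ to get a per--coordinate count $\le 6\varphi(q)q_1^{-(\tau-1)(1-\epsilon)}$ (which is always $\ge 1$, so it correctly absorbs the ``one point per short interval'' possibility), and the condition $n\left((\tau-1)(1-\epsilon)-1\right)>1$ is what yields $\tau>2+1/n$ as $\epsilon\to 0$.

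Relatedly, your diagnosis of where the threshold comes from is wrong: $2+1/n$ is not a root of $n\tau^2-(2n+1)\tau+1$ (that polynomial equals $1$ at $\tau=2+1/n$; its larger root is $\frac{2n+1+\sqrt{4n^2+1}}{2n}<2+1/n$). Hence your low--$q_1$ sum converges under a strictly weaker hypothesis than $\tau>2+1/n$, and it is the high--$q_1$ range --- the distribution of integers coprime to $q$ in very short intervals --- that forces the constraint, exactly as the paper points out in the discussion following the proposition. Your argument is repairable (add the box--count term in the high range, or use Lemma~\ref{distrinbscopremiers} as the paper does), and after repair it gives the same threshold, but the concluding sentence attributing the obstruction to the low--$q_1$ sum should be discarded.
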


\begin{proof}
Set $\widetilde{C}_{\tau}\!\!\left(\frac{\bm{p_0}}{q_0}\right):= C_{\tau}\!\!\left(\frac{\bm{p_0}}{q_0}\right)\Big\backslash C_{\tau}\!\!\left(\frac{2\bm{p_0}}{2q_0}\right)$. If $q>q_0$ is large enough, the number of absolutely $q$--primitive vectors $\bm{p}\in\Z^n$ such that $C_{\tau}\!\!\left(\frac{\bm{p}}{q}\right) \subset \widetilde{C}_{\tau}\!\!\left(\frac{\bm{p_0}}{q_0}\right)$ is certainly bigger than 
\begin{equation}\label{comptagesansrestrictionabspprimitive}
2^n \left(1-\frac{1}{2^{\tau}}\right)^n\frac{\varphi(q)^n}{2^n} \, \lambda_n\!\left(C_{\tau}\!\!\left(\frac{\bm{p_0}}{q_0}\right)\right) \, \underset{(\tau>1)}{\ge}\, \frac{\varphi(q)^n}{2^n} \lambda_n\!\left(C_{\tau}\!\!\left(\frac{\bm{p_0}}{q_0}\right)\right)
\end{equation}
(this follows for instance from Lemma~\ref{distrinbscopremiers}).

Assume now that there exist an integer $q_1>q_0$ and $\bm{p_1}\in\Z^n$ such that $\widetilde{C}_{\tau}\!\!\left(\frac{\bm{p_0}}{q_0}\right) \cap C_{\tau}\!\!\left(\frac{\bm{p_1}}{q_1}\right) \neq \emptyset.$ In particular, $\bm{p_1}/q_1 \neq \bm{p_0}/q_0$, whence $$\frac{1}{q_0q_1} \le \left|\frac{\bm{p_0}}{q_0} - \frac{\bm{p_1}}{q_1} \right| < \frac{2}{q_0^{\tau}}\cdotp$$
This means that, when computing the number of hypercubes $C_{\tau}\!\!\left(\frac{\bm{p}}{q}\right)$ of new generation in $\widetilde{C}_{\tau}\!\!\left(\frac{\bm{p_0}}{q_0}\right)$ ($\bm{p}\in\Z^n$), it suffices to consider those hypercubes of this form which have no overlap with any hypercube of the form $C_{\tau}\!\!\left(\frac{\bm{p_1}}{q_1}\right)$, where $\bm{p_1}\in\Z^n$ and $q_1> q_0^{\tau - 1}/2$. 

Given this, let us now count the number of integer vectors $\bm{p}\in\Z^n$ such that $C_{\tau}\!\!\left(\frac{\bm{p}}{q}\right)$  has a non--empty intersection with a hypercube $C_{\tau}\!\!\left(\frac{\bm{p_1}}{q_1}\right)$ contained in $C_{\tau}\!\!\left(\frac{\bm{p_0}}{q_0}\right)$, where $\bm{p_1}\in\Z^n$ and $\frac{q_0^{\tau - 1}}{2} < q_1 < q$.

\paragraph{}\emph{First case~:} $\frac{q_0^{\tau - 1}}{2} < q_1 \le \frac{q_0^{\tau}}{4}$. Fix an integer $q_1$ in this range. Then there exists at most one integer vector $\bm{p_1}\in\Z^n$ such that $C_{\tau}\!\!\left(\frac{\bm{p_1}}{q_1}\right) \cap C_{\tau}\!\!\left(\frac{\bm{p_0}}{q_0}\right) \neq \emptyset$. Indeed, should there exist another one $\bm{p'_1}\in\Z^n$, one would have $$\frac{1}{q_1}\, \le \, \left|\frac{\bm{p_1}}{q_1} - \frac{\bm{p'_1}}{q_1}\right| \, \le \, \left|\frac{\bm{p_1}}{q_1} - \frac{\bm{p_0}}{q_0}\right| + \left|\frac{\bm{p_0}}{q_0} - \frac{\bm{p'_1}}{q_1}\right| \, < \, \frac{4}{q_0^\tau},$$ contradicting the assumption on $q_1$.

Suppose now that there does exist $\bm{p_1}= \left(p_{1,i}\right)_{1\le i \le n}\in\Z^n$ satisfying $C_{\tau}\!\!\left(\frac{\bm{p_1}}{q_1}\right) \cap C_{\tau}\!\!\left(\frac{\bm{p_0}}{q_0}\right) \neq \emptyset$. If, furthermore, $\bm{p}=\left(p_1, \dots, p_n\right)\in\Z^n$ is an absolutely $q$--primitive vector such that $C_{\tau}\!\!\left(\frac{\bm{p}}{q}\right) \cap C_{\tau}\!\!\left(\frac{\bm{p_1}}{q_1}\right) \neq \emptyset$, then, for any $i\in \llbracket 1 , n \rrbracket$, 
\begin{equation}\label{localisationpietp1i}
\left| p_i - \frac{p_{1,i}}{q_1}q\right| < \frac{2q}{q_1^\tau}\cdotp
\end{equation}
Under the assumption that $q\ge q_0^{\tau^3}$ and $q_1\le q_0^\tau/4$, it follows from Lemma~\ref{distrinbscopremiers} that, if $q_0$ is chosen large enough, the number of such absolutely $q$--primitive vectors $\bm{p}\in\Z^n$ is less than $$K \frac{\varphi(q)^n}{q_1^{n\tau}}$$ for some constant $K>0$ depending on $n$.

Summing over all the possible values of $q_1$, the number of hypercubes $C_{\tau}\!\!\left(\frac{\bm{p}}{q}\right)$ with $\bm{p}\in\Z^n$ absolutely $q$--primitive having a non--empty intersection with a hypercube of the form $C_{\tau}\!\!\left(\frac{\bm{p_1}}{q_1}\right)$ is seen to be less that 
\begin{equation}\label{comptagepremiercas}
K \varphi(q)^n \sum_{q_0^{\tau -1}/2< q_1 \le q_0^{\tau}/4} q_1^{-n\tau} \, \le \, K\varphi(q)^n \sum_{q_1>q_0^{\tau -1}/2} q_1^{-n\tau} \, \le \, c_1 \frac{\varphi(q)^n}{q_0^{(\tau -1)(n\tau -1)}}
\end{equation}
for some $c_1>0$ depending on $\tau$ and $n$.

\paragraph{}\emph{Second case~:} $\frac{q_0^{\tau}}{4} < q_1 < q$. Fix an integer $q_1$ in this range and assume that $C_{\tau}\!\!\left(\frac{\bm{p_1}}{q_1}\right) \cap C_{\tau}\!\!\left(\frac{\bm{p}}{q}\right) \neq \emptyset$ for some $\bm{p}\in\Z^n$ absolutely $q$--primitive and some $\bm{p_1}\in\Z^n$. Then 
\begin{equation}\label{majoq_1parrapaq}
\frac{1}{q q_1}\, \le \, \left|\frac{\bm{p}}{q} - \frac{\bm{p_1}}{q_1} \right| \, < \, \frac{2}{q_1^{\tau}}, \quad \textrm{ whence } \quad q_1^{\tau -1} < 2q.
\end{equation}
Furthermore, inequalities~(\ref{localisationpietp1i}) still hold true.

Given $\epsilon>0$ and $i\in \llbracket 1 , n \rrbracket$, it follows from Lemma~\ref{distrinbscopremiers} that the number of solutions in $p_i$ to~(\ref{localisationpietp1i}) is 
\begin{equation}\label{estimfaible}
\varphi(n) \left(\frac{4}{q_1^{\tau}} + o\left(\frac{1}{q^{1-\epsilon}}\right)\right) \, \underset{(\ref{majoq_1parrapaq})}{\le} \, \frac{6\varphi\left(q\right)}{q_1^{(\tau -1)(1-\epsilon)}}
\end{equation} for $q_0$ (and so $q_1$ and $q$) large enough depending on the choice of $\epsilon>0$ (note that the error term in Lemma~\ref{distrinbscopremiers} is independent of $\eta>0$).
Now, if there is an overlap between $C_{\tau}\!\!\left(\frac{\bm{p_1}}{q_1}\right)$ and $C_{\tau}\!\!\left(\frac{\bm{p_0}}{q_0}\right)$, it is easily seen that $p_{1,i}$ can assume at most $8q_1 \, \lambda\!\left(I_{\tau}\!\!\left(\frac{p_{0,i}}{q_0}\right)\right)$ values (where $\bm{p_0}=\left(p_{0,i}\right)_{1\le i \le n}$), so the number of solutions to~(\ref{localisationpietp1i}) in $\bm{p}\in\Z^n$ absolutely $q$--primitive is at most $$ 8^n \left(\frac{6\varphi\left(q\right)}{q_1^{(\tau -1)(1-\epsilon)-1}}\right)^n \, \lambda_n\!\left(C_{\tau}\!\!\left(\frac{\bm{p_{0}}}{q_0}\right)\right)$$ for $q_0$ large enough.

Summing over all the possible values for $q_1$, the number of hypercubes $C_{\tau}\!\!\left(\frac{\bm{p}}{q}\right)$ with $\bm{p}\in\Z^n$ absolutely $q$--primitive having a non--empty intersection with a hypercube of the form $C_{\tau}\!\!\left(\frac{\bm{p_1}}{q_1}\right)$ is seen to be less that 
\begin{align}\label{comptagesecondcas}
& 48^n \varphi(q)^n \, \lambda_n\!\left(C_{\tau}\!\!\left(\frac{\bm{p_{0}}}{q_0}\right)\right) \, \sum_{q_0^{\tau}/4< q_1 \le (2q)^{1/(\tau -1)}} q_1^{-n\left((\tau -1)(1-\epsilon)-1\right)} \nonumber \\
& \quad \le \, 48^n \varphi(q)^n \, \lambda_n\!\left(C_{\tau}\!\!\left(\frac{\bm{p_{0}}}{q_0}\right)\right) \, \sum_{q_1 > q_0^{\tau}/4} q_1^{-n\left((\tau -1)(1-\epsilon)-1\right)} \, \le \, \frac{c_2 \varphi(q)^n \, \lambda_n\!\left(C_{\tau}\!\!\left(\frac{\bm{p_{0}}}{q_0}\right)\right)}{q_0^{\tau\left(n\left((\tau -1)(1-\epsilon)-1\right) -1 \right)}}
\end{align}
for $q_0$ large enough depending on the choice of an arbitrarily small $\epsilon>0$ and for some $c_2>0$ depending on $\tau$ and $n$.

\paragraph{}\emph{Conclusion.} Taking into account~(\ref{comptagesansrestrictionabspprimitive}), (\ref{comptagepremiercas}) and (\ref{comptagesecondcas}), for $q>q_0^{\tau^3}$ large enough, 
$$\mathcal{N}\left(q,\frac{\bm{p_0}}{q_0}, \tau\right) \, \ge \, \frac{\varphi(q)^n}{2^n} \, \lambda_n\!\left(C_{\tau}\!\!\left(\frac{\bm{p_{0}}}{q_0}\right)\right) \left[1- 2^n\frac{q_0^{n\tau}}{2^n} \frac{c_1}{q_0^{(\tau-1)(n\tau -1)}} - \frac{2^n c_2}{q_0^{\tau\left(n\left((\tau -1)(1-\epsilon)-1\right) -1 \right)}} \right]$$ (we used the fact that $\lambda_n\!\left(C_{\tau}\!\!\left(\frac{\bm{p_{0}}}{q_0}\right)\right) = 2^n/q_0^{n\tau}$). This holds provided that $q_0$ satisfies the assumptions of~(\ref{comptagesansrestrictionabspprimitive}), (\ref{comptagepremiercas}) and~(\ref{comptagesecondcas}).

Now if $\epsilon>0$ has been chosen small enough, this last quantity is bigger than $\varphi(q)^n\, \lambda_n\!\left(C_{\tau}\!\!\left(\frac{\bm{p_{0}}}{q_0}\right)\right) / 2^{n+1}$ for $q_0$ large enough if $\tau > 1+ (1+1/n)/(1-\epsilon)$. The result follows on letting $\epsilon$ tend to zero.
\end{proof}

Proposition~\ref{originedelacontrainte} imposes the constraint $\tau>2+1/n$ in the statement of Theorem~\ref{thmprinciliminf}. The nature of this constraint appears to be twofold~: on the one hand, one could expect to improve inequalities~(\ref{comptagepremiercas}) by restricting the summation over only those integers $q_1$ for which there exists, in the first case of the proof, an overlap between $C_{\tau}\!\!\left(\frac{\bm{p_0}}{q_0}\right)$ and $C_{\tau}\!\!\left(\frac{\bm{p_1}}{q_1}\right)$ for some $\bm{p_1}\in\Z^n$. On the other hand, in the second case of the proof, Lemma~\ref{distrinbscopremiers} does not give enough information about the distribution of integers coprime to $q$ in very short intervals, so that estimate~(\ref{estimfaible}) leads to some loss of accuracy.

It is not clear however whether improvements on these inequalities will extend the result of Theorem~\ref{thmprinciliminf} to the case where $\tau$ lies in the interval $\left( 1+\nu\left(\mathcal{Q}\right)/n \, , \, 2+1/n \right)$. Indeed, one could also expect the Hausdorff dimension of liminf sets such as those under consideration to admit a ``phase transition'' at the critical value $\tau = 2+1/n$, that is to say the value of this dimension will be given by different expressions depending on whether $\tau$ is bigger or smaller than $2+1/n$. Such a phenomenon has already been conjectured in other situations --- see for instance Conjecture~1 of~\cite{bugeauddurand}.

In any case, restricting to the case $n=1$ for simplicity, the main underlying difficulty with the proof of Theorem~\ref{thmprinciliminf} turns out to be the control of the intersections of the intervals $I_{\tau}\!\!\left(\frac{p}{q}\right)$ and $I_{\tau}\!\!\left(\frac{p_1}{q_1}\right)$. This is also the notorious issue in proving the Duffin--Schaeffer conjecture~: as pointed out (and explained in more detail) in~\cite{dufschaefferextradivII}, this happens not just to be a deficiency in our knowledge but a \emph{real} problem in the sense that the intersection $I_{\tau}\!\!\left(\frac{p}{q}\right) \cap I_{\tau}\!\!\left(\frac{p_1}{q_1}\right)$ may be empty or it may well have a measure much bigger than the expected value $\lambda\!\left(I_{\tau}\!\!\left(\frac{p}{q}\right)\right) \times \lambda\!\left(I_{\tau}\!\!\left(\frac{p_1}{q_1}\right)\right)$ depending on the values taken by $p/q$ and $p_1/q_1$. It is likely that any further improvement on the bound for $\tau > 1+\nu\left(\mathcal{Q}\right)/n$ in Theorem~\ref{thmprinciliminf} would require the use of ideas very closely related to the problem of Duffin and Schaeffer.

\paragraph{} The last result of this subsection contains the main feature of the proof of Theorem~\ref{thmprinciliminf} and should be compared with Lemma~4 of~\cite{borofraen}.

\begin{lem}\label{lemmeconstruction}
Let $\tau > 2+1/n$, $\bm{p_0}\in\Z^n$ and $q_0\in\N$ such that $C_{\tau}\!\!\left(\frac{\bm{p_{0}}}{q_0}\right) \subset (0,1)^n$ and such that Proposition~\ref{originedelacontrainte} applies. Assume furthermore that $\nu\left(\mathcal{S}\right)>0$ and that $\delta_{2k}\left(\mathcal{S}\right) - \delta_{k}\left(\mathcal{S}\right) = o\left(\frac{k^{\nu\left(\mathcal{S}\right)}}{\left(\log \log k\right)^n} \right).$

Then for any $k>q_0$ sufficiently large, there exists a set $\mathcal{E}_{\tau}\!\!\left(\frac{\bm{p_0}}{q_0}\right)$ of rational vectors contained in $C_{\tau}\!\!\left(\frac{\bm{p_0}}{q_0}\right)$ such that~:
\begin{itemize}
\item[i)] for any $\frac{\bm{p}}{q}\in \mathcal{E}_{\tau}\!\!\left(\frac{\bm{p_0}}{q_0}\right)$, $\bm{p}\in\Z^n$ is absolutely $q$--primitive, $q\in\mathcal{S}$ and $k<q\le 2k$ ;
\item[ii)] for any two distinct elements $\frac{\bm{p_1}}{q_1}$ and $\frac{\bm{p_2}}{q_2}$ in $\mathcal{E}_{\tau}\!\!\left(\frac{\bm{p_0}}{q_0}\right)$ such that $q_1\le q_2$, $$\left|\frac{\bm{p_1}}{q_1}-\frac{\bm{p_2}}{q_2}\right|\, \ge \, \frac{1}{q_1^{1+\nu\left(\mathcal{S}\right)/n}} \; ;$$
\item[iii)] for any $\frac{\bm{p}}{q}\in \mathcal{E}_{\tau}\!\!\left(\frac{\bm{p_0}}{q_0}\right)$, $C_{\tau}\!\!\left(\frac{\bm{p}}{q}\right)$ is a hypercube of new generation in $C_{\tau}\!\!\left(\frac{\bm{p_{0}}}{q_0}\right)$ ;
\item[iv)] the following holds true~: $$\# \mathcal{E}_{\tau}\!\!\left(\frac{\bm{p_0}}{q_0}\right) \, \ge \, \frac{\lambda_n\!\left(C_{\tau}\!\!\left(\frac{\bm{p_{0}}}{q_0}\right)\right)}{2^{n+2}}\, \sum_{\underset{q\in\mathcal{S}}{k<q\le 2k}} \varphi(q)^n\, \gg \, \lambda_n\!\left(C_{\tau}\!\!\left(\frac{\bm{p_{0}}}{q_0}\right)\right) \frac{k^n \left(\delta_{2k}\left(\mathcal{S}\right) - \delta_{k}\left(\mathcal{S}\right)\right)}{\left(\log \log k\right)^n},$$ where the implicit constant depends only on $n$.
\end{itemize}
\end{lem}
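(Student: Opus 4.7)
\emph{Plan.} The strategy is to assemble $\mathcal{E}_{\tau}\!\!\left(\frac{\bm{p_0}}{q_0}\right)$ by pooling, for each $q \in \mathcal{S} \cap \llbracket k+1, 2k \rrbracket$, the $N_q := \varphi(q)^n \lambda_n(C_{\tau}\!\!\left(\frac{\bm{p_0}}{q_0}\right))/2^{n+1}$ absolutely $q$--primitive vectors $\bm{p} \in \Z^n$ whose hypercubes $C_{\tau}\!\!\left(\frac{\bm{p}}{q}\right)$ are of new generation in $C_{\tau}\!\!\left(\frac{\bm{p_0}}{q_0}\right)$, as provided by Proposition~\ref{originedelacontrainte}, and then to extract a subset satisfying the spacing condition (ii). Properties (i) and (iii) are inherited directly from Proposition~\ref{originedelacontrainte}. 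Property (ii) for two distinct candidates $\bm{p}/q$ and $\bm{p}'/q$ of the same denominator is automatic, since $|\bm{p}/q - \bm{p}'/q| \ge 1/q > 1/q^{1+\nu(\mathcal{S})/n}$ whenever $\nu(\mathcal{S}) > 0$; only cross--denominator spacing is substantive.

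Enumerate $\mathcal{S} \cap \llbracket k+1, 2k \rrbracket$ in increasing order as $q_1 < \cdots < q_m$ with $m := \delta_{2k}(\mathcal{S}) - \delta_k(\mathcal{S})$. I would then construct $\mathcal{E}_{\tau}\!\!\left(\frac{\bm{p_0}}{q_0}\right)$ inductively on $j$: at step $j$, retain exactly those candidates $\bm{p}/q_j$ lying at sup--distance $\ge 1/q_i^{1+\nu(\mathcal{S})/n}$ from every previously retained vector of denominator $q_i$, for each $i < j$. A box--counting remark underpins the subsequent estimate: for each prior $\bm{p}'/q_i$ and each subsequent $q_j$, the integer vectors $\bm{p}$ forbidden by the spacing constraint lie in a cube of side $2q_j/q_i^{1+\nu(\mathcal{S})/n} \le 4/k^{\nu(\mathcal{S})/n}$, which is less than $1$ for $k$ sufficiently large; hence each prior contributes at most one forbidden integer vector per denominator. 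Summing the $\R^n$--volumes of all forbidden balls across $i<j$ and using the crude upper bound $|\mathcal{E}_{q_i}| \ll \varphi(q_i)^n \lambda_n(C_{\tau}\!\!\left(\frac{\bm{p_0}}{q_0}\right)) \le q_i^n \lambda_n(C_{\tau}\!\!\left(\frac{\bm{p_0}}{q_0}\right))$, together with $\sum_{i<j} 1/q_i^{\nu(\mathcal{S})} \le m/k^{\nu(\mathcal{S})}$, yields a total forbidden volume bounded by $\ll \lambda_n(C_{\tau}\!\!\left(\frac{\bm{p_0}}{q_0}\right)) \cdot m/k^{\nu(\mathcal{S})}$; the hypothesis on $m$ then reduces this to $o(\lambda_n(C_{\tau}\!\!\left(\frac{\bm{p_0}}{q_0}\right))/(\log\log k)^n)$.

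\textbf{The main obstacle} is to upgrade this volumetric estimate to a deterministic bound on the number of candidates actually discarded at step $j$, because the worst--case per--prior count of one forbidden vector aggregates to a wasteful total. The cleanest remedy I envisage is to count \emph{close pairs} $(\bm{p}/q, \bm{p}'/q')$ with $q' \le q$ and $|\bm{p}/q - \bm{p}'/q'| < 1/(q')^{1+\nu(\mathcal{S})/n}$ directly as lattice points of $\Z^n \times \Z^n$ in an explicit region of $\R^{2n}$: for each fixed pair of denominators $(q',q)$ the corresponding region has volume $\ll q^n (q')^n \lambda_n(C_{\tau}\!\!\left(\frac{\bm{p_0}}{q_0}\right)) \cdot (2/(q')^{1+\nu(\mathcal{S})/n})^n$. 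Summing over the $\binom{m}{2}$ pairs of denominators and invoking both the hypothesis $m = o(k^{\nu(\mathcal{S})}/(\log\log k)^n)$ and Mertens' lower bound $\varphi(q) \gg q/\log\log q$ shows that the total number of close pairs is $o(|\mathcal{C}|)$, where $\mathcal{C}$ denotes the full candidate bank of size $\ge \sum_q N_q$. Deleting one endpoint per close pair leaves a set $\mathcal{E}_{\tau}\!\!\left(\frac{\bm{p_0}}{q_0}\right)$ of cardinality $\ge (1-o(1))|\mathcal{C}| \ge \sum_q N_q/2 = \lambda_n(C_{\tau}\!\!\left(\frac{\bm{p_0}}{q_0}\right))/2^{n+2} \cdot \sum_{q \in \mathcal{S}, k<q\le 2k} \varphi(q)^n$, which is the explicit bound in (iv). The asymptotic inequality $\gg \lambda_n(C_{\tau}\!\!\left(\frac{\bm{p_0}}{q_0}\right)) \cdot k^n m/(\log\log k)^n$ follows immediately from $\varphi(q) \gg q/\log\log q$ applied to each $q > k$.
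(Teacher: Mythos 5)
Your overall architecture is the same as the paper's: start from the new--generation hypercubes supplied by Proposition~\ref{originedelacontrainte}, observe that same--denominator pairs are automatically separated, and then delete one endpoint of every ``close pair'' $\left(\frac{\bm{p_1}}{q_1},\frac{\bm{p}}{q}\right)$ with $q_1<q$ and $\left|\frac{\bm{p_1}}{q_1}-\frac{\bm{p}}{q}\right|<q_1^{-1-\nu(\mathcal{S})/n}$, showing that the number of such pairs is a negligible fraction of the candidate bank. You also correctly diagnose that the crude ``one forbidden vector per retained prior'' bound is too lossy. The problem is the step you use to repair it.

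The genuine gap is your claim that the number of close pairs for a fixed pair of denominators $(q_1,q)$ is bounded by the volume $\ll q^n q_1^n\,\lambda_n\!\left(C_{\tau}\!\!\left(\frac{\bm{p_0}}{q_0}\right)\right)\left(2/q_1^{1+\nu(\mathcal{S})/n}\right)^n$ of the corresponding region in $\R^{2n}$. That region is, for each fixed $\bm{p}$, a cube of side $2/q_1^{\nu(\mathcal{S})/n}<1$ in the $\bm{p_1}$--variable, i.e.\ a thin neighbourhood of the rational ``diagonal'' $\bm{p_1}/q_1=\bm{p}/q$; the number of lattice points in such a set is governed by the arithmetic of $q$ and $q_1$ (in effect by $\gcd(q,q_1)$ and the distribution of $q_1\bm{p}/q$ modulo $1$), not by its Lebesgue volume. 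This is exactly the Duffin--Schaeffer overlap phenomenon the paper discusses after Proposition~\ref{originedelacontrainte}: the intersection pattern of $I_{\tau}(p/q)$ and $I_{\tau}(p_1/q_1)$ can deviate badly from the volume prediction. What the elementary argument (Lemma~I of Duffin--Schaeffer, which is what the paper invokes) actually gives is that, for each coordinate, the number of $p_i$ in a \emph{full period} $[0,q)$ admitting a $p_{1,i}$ with $\left|p_{1,i}q-p_iq_1\right|<q/q_1^{\nu(\mathcal{S})/n}$ is at most $2q/q_1^{\nu(\mathcal{S})/n}$; one cannot localise this to the window $C_{\tau}\!\!\left(\frac{\bm{p_0}}{q_0}\right)$, so the correct per--pair bound is $\left(2q/q_1^{\nu(\mathcal{S})/n}\right)^n$, i.e.\ your bound \emph{without} the factor $\lambda_n\!\left(C_{\tau}\!\!\left(\frac{\bm{p_0}}{q_0}\right)\right)$. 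Fortunately the conclusion survives this correction: summing $2^nq^nq_1^{-\nu(\mathcal{S})}$ over $q_1$ gives $\ll q^n\left(\delta_{2k}(\mathcal{S})-\delta_k(\mathcal{S})\right)k^{-\nu(\mathcal{S})}$, and since $\lambda_n\!\left(C_{\tau}\!\!\left(\frac{\bm{p_0}}{q_0}\right)\right)$ is a constant independent of $k$, the hypothesis $\delta_{2k}(\mathcal{S})-\delta_k(\mathcal{S})=o\!\left(k^{\nu(\mathcal{S})}/(\log\log k)^n\right)$ together with $\varphi(q)\gg q/\log\log q$ still forces this to be at most $\varphi(q)^n\lambda_n\!\left(C_{\tau}\!\!\left(\frac{\bm{p_0}}{q_0}\right)\right)/2^{n+2}$ for $k$ large. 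With that single substitution your argument becomes the paper's proof; as written, the volume step is false.
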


\begin{proof}
For the sake of simplicity, let $C:=C_{\tau}\!\!\left(\frac{\bm{p_0}}{q_0}\right)$ in this proof only. Denote by $\mathcal{F}\left(C\right)$ the set of rational vectors $\frac{\bm{p}}{q}$ such that~:
\begin{itemize}
\item[1)] $q\in\mathcal{S}$ and $k< q \le 2k$;
\item[2)] $\bm{p}\in\Z^n$ is absolutely $q$--primitive;
\item[3)] $C_{\tau}\!\!\left(\frac{\bm{p}}{q}\right)$ is a hypercube of new generation in $C$.
\end{itemize}
If $\frac{\bm{p}}{q}$ and $\frac{\bm{p'}}{q'}$ are two rational vectors satisfying \emph{1)} and \emph{2)}, and if furthermore  $C_{\tau}\!\!\left(\frac{\bm{p}}{q}\right) \cap C_{\tau}\!\!\left(\frac{\bm{p'}}{q'}\right) \neq \emptyset$, then $$\frac{1}{4k^2} \, \le \, \left|\frac{\bm{p}}{q} - \frac{\bm{p'}}{q'}\right| \, \le \, \frac{1}{k^{\tau}},$$ which cannot happen if $\tau > 2$ and $k>q_0$ is chosen large enough. This shows together with Proposition~\ref{originedelacontrainte} that for such an integer $k$, 
\begin{equation}\label{rationnvellegenebis}
\# \mathcal{F}\left(C\right) \, \ge \, \frac{\lambda_n\!\left(C\right)}{2^{n+1}} \sum_{\underset{q\in\mathcal{S}}{k<q\le 2k}} \varphi(q)^n.
\end{equation}
Let $\mathcal{E}\left(C\right)$ be the subset of $\mathcal{F}\left(C\right)$ from which one excludes all the rational vectors $\frac{\bm{p}}{q}$ for which there exists an integer $q_1\in \llbracket k+1 , q-1 \rrbracket$ and an element $\frac{\bm{p_1}}{q_1} \in \mathcal{F}\left(C\right)$ satisfying $$\left|\frac{\bm{p_1}}{q_1} - \frac{\bm{p}}{q} \right|\, < \, \frac{1}{\left(q_1\right)^{1+\nu\left(\mathcal{S}\right)/n}}\cdotp$$ It should be clear that $\mathcal{E}\left(C\right)$ defined this way satisfies the conclusions \emph{i)} to  \emph{iii)} of the lemma. It remains to evaluate its cardinality.

Let $q_1, q \in \mathcal{S}$, $k< q_1 < q \le 2k$. When $q$ is fixed, denote by $N_i\left(q,q_1\right)$ ($1\le i \le n$) the number of integers $p_i$ such that there exists an integer $p_{1,i}$ satisfying 
\begin{equation}\label{borneentiersencore}
\left|p_{1,i}q - p_i q_1 \right| \, < \, \frac{q}{q_1^{\nu\left(\mathcal{S}\right)/n}}\cdotp
\end{equation}
Let furthermore $N(q)$ be the number of elements in $\mathcal{F}\left(C\right)\backslash \mathcal{E}\left(C\right)$~: it should be clear that  $$N(q) \, \le \, \sum_{\underset{q\in\mathcal{S}}{k<q_1<q}}\prod_{i=1}^{n} N_i\left(q, q_1\right).$$
From a familiar argument in elementary number theory (see for instance Lemma~I in~\cite{dufsch}), the number of solutions $N_i\left(q,q_1\right)$ in $p_i$ to~(\ref{borneentiersencore}) is bounded above by $2q/q_1^{\nu\left(\mathcal{S}\right)/n}$, whence $$N(q) \, \le \, 2^n q^n \sum_{\underset{q\in\mathcal{S}}{k<q_1<q}}\frac{1}{(q_1)^{\nu\left(\mathcal{S}\right)}}\, \le \, 2^n q^n \frac{\delta_{2k}\left(\mathcal{S}\right) - \delta_{k}\left(\mathcal{S}\right)}{k^{\nu\left(\mathcal{S}\right)}}\cdotp$$
Using the well--known result
\begin{equation}\label{ordreinffcteuler}
\liminf_{m\rightarrow +\infty}\left(\frac{\varphi(m)\log \log m}{m}\right) = e^{-\gamma},
\end{equation}
where $\gamma$ is Euler--Mascheroni constant, this also leads to the estimate valid for $k$ large enough $$N(q) \, \le \, 2^n e^{\gamma n} 2^n \varphi\left(q\right)^n \left(\log \log k\right)^n \frac{\delta_{2k}\left(\mathcal{S}\right) - \delta_{k}\left(\mathcal{S}\right)}{k^{\nu\left(\mathcal{S}\right)}},$$
where the implicit constant is absolute.

Now, by assumption on the sequence $\left(\delta_{2k}\left(\mathcal{S}\right) - \delta_{k}\left(\mathcal{S}\right)\right)_{k\ge 1}$, for $k$ large enough, $2^n e^{\gamma n} \left(\log \log k\right)^n \left(\delta_{2k}\left(\mathcal{S}\right) - \delta_{k}\left(\mathcal{S}\right)\right) k^{-\nu\left(\mathcal{S}\right)}\, \le \, \lambda_n\!\left(C\right) / 2^{2n+2}$, so that
\begin{equation}\label{nqbonnestim}
N(q) \, \le \, \frac{\varphi(q)^n \lambda_n\!\left(C\right)}{2^{n+2}}\cdotp
\end{equation}
For such an integer $k$, from~(\ref{rationnvellegenebis}) and~(\ref{nqbonnestim}), 
\begin{align*}
\# \mathcal{E}\left(C\right) = \# \mathcal{F}\left(C\right) - \# \left(\mathcal{F}\left(C\right)\backslash \mathcal{E}\left(C\right)\right) \, &\ge \, \sum_{\underset{q\in\mathcal{S}}{k< q \le 2k}} \left(\frac{\varphi(q)^n \lambda_n\!\left(C\right)}{2^{n+1}} - N(q) \right) \\
& \ge \, \frac{\lambda_n\!\left(C\right)}{2^{n+2}} \sum_{\underset{q\in\mathcal{S}}{k< q \le 2k}} \varphi(q)^n \\
& \underset{(\ref{ordreinffcteuler})}{\gg} \, \lambda_n\!\left(C\right) \frac{k^n}{(\log \log k)^n} \left(\delta_{2k}\left(\mathcal{S}\right) - \delta_{k}\left(\mathcal{S}\right)\right).  
\end{align*}
\end{proof}

\section{Proof of the main Theorem}

Theorem~\ref{thmprinciliminf} will now be proved for a given infinite set of positive integers $\mathcal{Q}$. As should be clear, it is enough to establish the result for the set $W^*_{\tau, n}(\mathcal{Q}) \cap [0,1]^n$.

\subsection{The upper bound}

Proving that $\dim W^*_{\tau, n}(\mathcal{Q}) \le \left(1+\nu(\mathcal{Q})\right)/\tau$ is almost trivial~: for any $N\ge 1$, $$\bigcup_{\underset{q\in\mathcal{Q}}{q\ge N}} \bigcup_{\bm{p}\in\llbracket 0, q \rrbracket ^n} C_{\tau}\!\!\left(\frac{\bm{p}}{q}\right)$$ is a cover of the limsup set $W_{\tau, n}(\mathcal{Q})$, so in particular of the liminf set $W^*_{\tau, n}(\mathcal{Q})$. Consequently, for any $N\ge 1$, the $s$--dimensional Hausdorff measure $\mathcal{H}^s\!\left(W^*_{\tau, n}(\mathcal{Q})\right)$ of the set $W^*_{\tau, n}(\mathcal{Q})$ satisfies $$\mathcal{H}^s\!\left(W^*_{\tau, n}(\mathcal{Q})\right) \, \le \, \sum_{\underset{q\in\mathcal{Q}}{q\ge N}} \frac{(q+1)^n}{q^{s\tau}}\cdotp$$ The right--hand side of this inequality is finite as soon as $s>(n+\nu\left(\mathcal{Q}\right))/\tau$, hence $\dim W^*_{\tau, n}(\mathcal{Q}) \le (n+\nu\left(\mathcal{Q}\right))/\tau$ for any $\tau>1+\nu\left(\mathcal{Q}\right)/n$.

\subsection{The lower bound}

The core of the proof of Theorem~\ref{thmprinciliminf} consists of establishing the correct lower bound for $\dim W^*_{\tau, n}(\mathcal{Q})$. The ideas developed here  are inspired by Chapters~1 and~4 of~\cite{falcofractalgeometry} and by~\cite{borofraen} (which is based itself on the pioneer work of Jarn\'ik~\cite{multijar}). 

\paragraph{} Recall first the construction of a \emph{level set} $E$ in $[0,1]^n$~: let $$[0,1]^n= E_0 \supset E_1 \supset E_2 \supset \dots$$ be a decreasing sequence of sets such that each $E_k$ is a finite union of disjoint and closed hypercubes. Assume furthermore that each hypercube of $E_k$ contains $m_k\ge 2$ hypercubes from $E_{k+1}$ and that the maximal diameter of the hypercubes of level $k$ (i.e. in $E_k$) tends to 0 as $k$ goes to infinity. Then 
\begin{equation}\label{ensdeniveau}
E:= \bigcap_{k=0}^{+\infty}E_k
\end{equation} 
is a totally disconnected subset of $[0,1]^n$ --- a Cantor set --- referred to as a level set.

It is possible to equip such a level set $E$ with a measure $\mu $ supported on it in the following way~: let $\mu_0$ be the uniform distribution on $E_0=[0,1]^n$. If $\mu_{k-1}$ is a measure supported on $E_{k-1}$ previously defined, let $\mu_k$ be the measure supported on $E_k$ assigning a mass of $\left(m_1\dots m_k\right)^{-1}$ to each of the $m_1\dots m_k$ hypercubes of $E_k$, the distribution of $\mu_k$ on each of these hypercubes being uniform. Denote by $\mathcal{E}$  the set of hypercubes of all levels used to construct $E$. For any $U\in\mathcal{E}$ of level $k$, let $\mu\left(U\right) := \mu_k\left(U\right) = \left(m_1\dots m_k\right)^{-1}$. If one sets, for any $A\subset \R^n$, 
\begin{equation}\label{mesureenscantor}
\mu\left(A\right):= \inf \left\{ \sum_{l=0}^{+\infty}\mu\left(U_l\right)\; : \; A\cap E \subset \bigcup_{l=0}^{+\infty} U_l \textrm{   and   } U_l\in\mathcal{E}\right\},
\end{equation}
then $\mu$ defines a probability measure supported on $E$ (see chapter 1 of~\cite{falcofractalgeometry} for details).

Such a measure often turns out to be useful when establishing a lower bound for $\dim E$ by virtue of the well--known Mass Distribution Principle which is now recalled (cf. for instance~\cite{falcofractalgeometry} for a proof).

\begin{thm}[Mass Distribution Principle]\label{massdistributionprinciple}
Let $E$ be a level set as described above supporting a probability measure $\mu$. Assume furthermore that for some $s\ge 0$, there exist numbers $c, \kappa>0$ such that 
\begin{equation}\label{majomesureprincidistribmass}
\mu\left(U\right) \, \le \, c \left|U\right|^s
\end{equation} 
for all hypercubes $U\in\R^n$ satisfying $\left|U\right|\le \kappa$ (recall that $\left|U\right|$ denotes the diameter of $U$). 

Then $$\dim E \ge s.$$
\end{thm}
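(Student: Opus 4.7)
The plan is to verify directly from the definition of the Hausdorff $s$-measure that any sufficiently fine cover of $E$ must carry total $s$-volume bounded below by a positive constant; this forces $\mathcal{H}^s(E)>0$ and hence $\dim E\ge s$.

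The first step would be to extend the hypothesis, which is stated for hypercubes, to arbitrary bounded sets. Any set $V\subset\R^n$ of diameter $|V|$ is contained in some closed hypercube whose diameter is at most $\gamma_n |V|$ for a suitable constant $\gamma_n$ depending only on $n$ (in the supremum norm one may in fact take $\gamma_n=1$ by enclosing $V$ in its coordinatewise bounding box). Applying the hypothesis to that enclosing hypercube yields
$$\mu(V)\;\le\; c\,\gamma_n^s\,|V|^s\;=:\;c'\,|V|^s$$
whenever $|V|\le \kappa/\gamma_n=:\kappa'$.

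The second step exploits the fact that $\mu$ is a probability measure supported on $E$. Let $\{U_i\}_{i\in I}$ be an arbitrary countable cover of $E$ by sets of diameter $|U_i|\le \delta$ for some $\delta \le \kappa'$. Countable subadditivity of $\mu$ together with $\mu(E)=1$ gives
$$1\;=\;\mu(E)\;\le\;\sum_{i\in I}\mu(U_i)\;\le\;c'\sum_{i\in I}|U_i|^s,$$
so that $\sum_{i\in I}|U_i|^s \ge 1/c'$. Taking the infimum over all such $\delta$-covers therefore shows that $\mathcal{H}^s_\delta(E)\ge 1/c'$, and letting $\delta\to 0$ produces $\mathcal{H}^s(E)\ge 1/c'>0$. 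By the very definition of the Hausdorff dimension, this forces $\dim E\ge s$, as required.

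The argument presents no genuine obstacle: the Mass Distribution Principle is a classical tool that reduces a lower bound on Hausdorff dimension to a pointwise upper bound on a well-chosen measure, and it relies only on the countable subadditivity of $\mu$ combined with the definition of $\mathcal{H}^s$. The only minor technicality, the passage from hypercubes to arbitrary cover sets, is handled by the trivial bounding-box observation above. A detailed treatment may be found in Chapter~4 of~\cite{falcofractalgeometry}.
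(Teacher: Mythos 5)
Your proof is correct and is precisely the classical argument: the paper gives no proof of its own, deferring to~\cite{falcofractalgeometry}, and your reasoning (extend the mass bound from hypercubes to arbitrary cover sets via a bounding box, then use $1=\mu(E)\le\sum_i\mu(U_i)\le c'\sum_i|U_i|^s$ to force $\mathcal{H}^s(E)\ge 1/c'>0$) is exactly the standard one found there. Nothing further is needed.
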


This principle shall now be used to compute the Hausdorff dimension of sufficiently large level sets contained in $W^*_{\tau, n}(\mathcal{Q})$.

\subsubsection{The case $\nu\!\left(\mathcal{Q}\right)>0$}

Assume first that $\nu\left(\mathcal{Q}\right)>0$ and let $\delta\in \left(0, \nu\left(\mathcal{Q}\right)/2\right)$. 

Since the sequence $\left(n^{\nu\left(\mathcal{Q}\right)-\delta}/\log n\right)_{n\ge 2}$ is increasing for $n$ large enough, Proposition~\ref{sousensbiendistribue} guarantees the existence of a subset $\mathcal{Q}_{\delta} \subset \mathcal{Q}$ for which one can find a strictly increasing sequence of natural integers $\left(n_k\right)_{k\ge 1}$ satisfying $$\delta_{2n_k}\left(\mathcal{Q}_{\delta}\right) - \delta_{n_k}\left(\mathcal{Q}_{\delta}\right)\, \underset{k\rightarrow +\infty}{\sim} \, \frac{n_k^{\nu\left(\mathcal{Q}_{\delta}\right)}}{\log n_k},$$ where $\nu\!\left(\mathcal{Q}_{\delta}\right) = \nu\!\left(\mathcal{Q}\right)-\delta$.

In the general construction of a level set, let $E_0:=[0,1]^n$ and, for $q_1\in\mathcal{Q}_{\delta}$, $q_1\ge 2$, $$E_1 = \bigcup_{\bm{p_1}\in\llbracket 1, q_1 -1 \rrbracket ^n} C_{\tau}\!\!\left(\frac{\bm{p_1}}{q_1}\right).$$ If $E_{k-1}$ ($k\ge 2$) has been defined, let $C_{\tau}\!\!\left(\frac{\bm{p_{k-1}}}{q_{k-1}}\right)$ be one of its connected components contained in $(0,1)^n$. From Lemma~\ref{lemmeconstruction}, there exists an element $q_k > q_{k-1}$ in the sequence $\left(n_k\right)_{k\ge 1}$ and 
\begin{equation}\label{nbrhypercubes}
m_k \, \gg \, \lambda_n\!\left(C_{\tau}\!\!\left(\frac{\bm{p_{k-1}}}{q_{k-1}}\right)\right)\, \sum_{\underset{q\in\mathcal{S}}{q_k<q\le 2q_k}} \varphi(q)^n\, \gg \, \frac{q_k^{n+\nu\!\left(\mathcal{Q}_{\delta}\right)}}{q_{k-1}^{n\tau}\left(\log q_k\right)\left(\log \log q_k\right)^n}
\end{equation}
hypercubes of new generation in $C_{\tau}\!\!\left(\frac{\bm{p_{k-1}}}{q_{k-1}}\right)$ of the form $C_{\tau}\!\!\left(\frac{\bm{p}}{q}\right)$ with $q_k < q \le 2q_k$ and $q\in\mathcal{Q}_{\delta}.$ Furthermore, the distance between these hypercubes is at least 
\begin{equation}\label{ecarthypercubes}
\epsilon_k := \frac{1}{2\left(q_k\right)^{1+\nu\!\left(\mathcal{Q}_{\delta}\right)/n}}
\end{equation}
(by convention, $\epsilon_0:=1$).

Let then $E_k$ be defined as the union of all these hypercubes over all the connected components of $E_{k-1}$ and let $E$ be as in~(\ref{ensdeniveau}). By construction, $E\subset W^*_{\tau, n}\left(\mathcal{Q}_{\delta}\right) \subset W^*_{\tau, n}(\mathcal{Q})$ and $E$ supports a probability measure $\mu$ as mentioned in~(\ref{mesureenscantor}).

\begin{rem}
The connected components of $E_k$ ($k\ge 1$) are of the form $C_{\tau}\!\!\left(\frac{\bm{p}}{q}\right)$ for some $\bm{p}\in\Z^n$ and $q\in\mathcal{Q}$ and so are not closed as in the definition of a level set. This difficulty can easily be overcome by redefining them as the closure of the same hypercubes whose side lengths are shrunk by a factor $1-\eta$ for some $\eta<1/2$. It is then readily checked that Proposition~\ref{originedelacontrainte} and Lemma~\ref{lemmeconstruction} remain true up to an additional multiplicative constant which shall not cause any trouble at all in the rest of the proof. For the sake of simplicity of notation, such detail shall be omitted in what follows.
\end{rem}

\paragraph{} Letting $$\rho:= \frac{n+\nu\left(\mathcal{Q}_{\delta}\right)-\delta}{\tau} = \frac{n+\nu\left(\mathcal{Q}\right)-2\delta}{\tau},$$ it will now be shown by induction on $k\ge 0$ that the sequence $\left(q_k\right)_{k\ge 0}$ may be chosen in such a way that, for any hypercube $U\subset \R^n$, (\ref{majomesureprincidistribmass}) holds true with $s= \rho$ for some real $c>0$ to be defined later. The following simplifies a great deal the method of~\cite{borofraen}.

Let $U$ be a hypercube in $\R^n$ and let $k\ge 0$ be such that $\epsilon_{k+1} \le \left|U\right| < \epsilon_k$ (this comes down to taking $\kappa = \epsilon_0 = 1$ in Theorem~\ref{massdistributionprinciple}). Then $U$ intersects at most one connected component of $E_k$ and, since the measure $\mu$ is supported on $E$, there is no loss of generality in assuming that it is actually contained in this connected component. Furthermore, it may also be assumed that $U$ intersects $E_{k+1}$ (otherwise $\mu\left(U\right) = 0$ again from~(\ref{mesureenscantor}) and the result to prove is trivial). Thus, under these conditions, it follows from~(\ref{mesureenscantor}) that  $$\mu\left(U\right)\, \le \, \mu_{k+1}\left(U\right),$$ where $\mu_{k+1}$ is the uniform distribution supported by $E_{k+1}$.

All this shows that it is enough to prove by induction on $k\ge 0$ the following statement~:\\
$(H_k)$~: \emph{For any hypercube} $U$ \emph{contained in a connected component of} $E_k,$ \emph{having a non--empty intersection with} $E_{k+1}$ \emph{and satisfying furthermore} $\epsilon_{k+1} \le \left|U\right| < \epsilon_k ,$ 
  $$\frac{\mu_{k+1}\left(U\right)}{\left|U \right|^{\rho}}\,\le \, c.$$

Note that for any hypercube $U\subset [0,1]^n$, $$\frac{\mu_{0}\left(U\right)}{\left|U \right|^{\rho}} = \frac{\lambda_{n}\!\left(U\right)}{\left|U \right|^{\rho}}\,< \, \left|U\right|^{n-\rho}\, \le \, 1.$$ Therefore, it shall be assumed that $c\ge 1$.

\paragraph{} Consider now an integer $k \ge 0$ and a hypercube $U$ satisfying the assumptions of $(H_k)$. Let $C_k$ be the connected component of $E_k$ containing $U$ and let $N_{U}$ denote the number of connected components of $E_{k+1}$ having a non--empty intersection with $U$. By assumption, $N_{U}\ge 1$. The conclusion of $(H_k)$ is proved by distinguishing two subcases.

\paragraph{}\emph{First subcase~:} $\left|U \right| \ge \left(q_{k+1}\right)^{-1/2}.$ Under this assumption, if $q_1$ is chosen large enough so that Lemma~\ref{distrinbscopremiers} applies with $\epsilon = 1/2$, then, for all $k\ge 0$, 
\begin{equation}\label{nbrdintersectiondeU}
N_U \, \ll \, \left|U\right|^n  \sum_{\underset{q\in\mathcal{S}}{q_{k+1}< q \le 2q_{k+1}}} \varphi(q)^n,
\end{equation}
hence 
\begin{align*}
\frac{\mu_{k+1}\left(U\right)}{\left|U \right|^{\rho}} \, &\le \, \frac{\mu_{k+1}\left(C_k\right)}{\left|U \right|^{\rho}} = \frac{\mu_{k}\left(C_k\right)}{m_{k+1}} \frac{1}{\left|U \right|^{\rho}} \, \le \, \frac{\mu_{k}\left(C_k\right)}{m_{k+1}} \frac{N_U}{\left|U \right|^{\rho}} \\
& \underset{(\ref{nbrhypercubes})\, \& \,(\ref{nbrdintersectiondeU})}{\ll} \, \frac{\mu_{k}\left(C_k\right)}{\left|U \right|^{\rho}}\frac{\left|U \right|^{n}}{\left|C_k \right|^{n}} \left(\sum_{\underset{q\in\mathcal{S}}{q_{k+1}< q \le 2q_{k+1}}} \varphi(q)^n\right) \left(\sum_{\underset{q\in\mathcal{S}}{q_{k+1}< q \le 2q_{k+1}}} \varphi(q)^n\right)^{-1} \\
& \qquad  \qquad  = \frac{\mu_{k}\left(C_k\right)}{\left|C_k \right|^{\rho}} \left(\frac{\left|U \right|}{\left|C_k \right|}\right)^{n-\rho} \\
& \le \, \frac{\mu_{k}\left(C_k\right)}{\left|C_k \right|^{\rho}}\cdotp
\end{align*}

If $k=0$, this means that there exists a constant $K\ge 1$ such that $$\frac{\mu_{1}\left(U\right)}{\left|U \right|^{\rho}} \, \le \, K\, \frac{\mu_{0}\left(C_0\right)}{\left|C_0 \right|^{\rho}},$$ where $C_0=[0,1]^n$. Choosing $c$ bigger than this last quantity proves the result in this case.

If $k\ge 1$, then, denoting by $C_{k-1}$ the connected component of $E_{k-1}$ containing $C_k$, $$\frac{\mu_{k}\left(C_k\right)}{\left|C_k \right|^{\rho}} = \frac{\mu_{k-1}\left(C_{k-1}\right)}{m_k\left|C_k \right|^{\rho}} \, \ll \, \frac{\mu_{k-1}\left(C_{k-1}\right) q_{k-1}^{n\tau}\left(\log q_k\right) \left(\log \log q_k\right)^n}{q_k^{\delta}},$$ the last inequality following from~(\ref{nbrhypercubes}) and the fact that $\left|C_k\right| \asymp q_k^{-\tau}.$ Choosing $q_k$ large enough in the previous step, this quantity can be made arbitrarily small.

\paragraph{}\emph{Second subcase~:} $\left|U \right| \le \left(q_{k+1}\right)^{-1/2}.$ By assumption, $\epsilon_{k+1} \le \left|U\right|$. Since two connected components of $E_{k+1}$ are distant from at least $\epsilon_{k+1}$, inequality~(\ref{ecarthypercubes}) implies $$N_U \, \ll \, \left|U\right|^n \left(q_{k+1}\right)^{n+\nu\!\left(\mathcal{Q}_{\delta}\right)}.$$
Therefore, denoting by $C_{k+1}$ any connected component of $E_{k+1}$, 
\begin{align*}
\frac{\mu_{k+1}\left(U\right)}{\left|U \right|^{\rho}} \, &\le \, \frac{\mu_{k+1}\left(C_{k+1}\right)N_U}{\left|U \right|^{\rho}} \, \ll \, \frac{\mu_{k}\left(C_k\right)}{m_{k+1}} q_{k+1}^{n+\nu\!\left(\mathcal{Q}_{\delta}\right)} q_{k+1}^{-(n-\rho)/2} \\
& \underset{(\ref{nbrhypercubes})}{\ll} \, \frac{\mu_{k}\left(C_k\right)q_k^{n\tau}\left(\log q_{k+1}\right)\left(\log \log q_{k+1}\right)^n}{q_{k+1}^{(n-\rho)/2}}
\end{align*}
(for the second inequality, we used the fact that $C_{k+1} \subset C_k$). Choosing $q_{k+1}$ large enough, this quantity can be made arbitrarily small.

\paragraph{}\emph{Conclusion~:} From the Mass Distribution Principle (Theorem~\ref{massdistributionprinciple}), for any $\delta \in \left(0, \nu\!\left(\mathcal{Q}\right)/2\right)$, $$\dim W^*_{\tau, n}(\mathcal{Q}) \, \ge \, \dim E \, \ge \, \frac{n+\nu\!\left(\mathcal{Q}\right) - 2\delta}{\tau}\cdotp$$ Letting $\delta$ tend to zero completes the proof of Theorem~\ref{thmprinciliminf} in the case $\nu\!\left(\mathcal{Q}\right) > 0$.

\subsubsection{The case $\nu\!\left(\mathcal{Q}\right)=0$}

The proof in the case $\nu\!\left(\mathcal{Q}\right) = 0$ is a simplified version of the previous one. We only mention here the changes to make in the latter~: in the construction of the level set $E$, assume that $E_{k-1}$ ($k\ge 2$) has been defined and let $C_{\tau}\!\!\left(\frac{\bm{p_{k-1}}}{q_{k-1}}\right)$ be one of its connected components. For $q_k > q_{k-1}$ large enough, $q_k\in\mathcal{Q}$, Proposition~\ref{originedelacontrainte} guarantees the existence of at least $$m_k \, \gg \, \varphi\left(q_k\right)^n \, \lambda_n\!\left(C_{\tau}\!\!\left(\frac{\bm{p_{k-1}}}{q_{k-1}}\right)\right) \, \underset{(\ref{ordreinffcteuler})}{\gg} \, \frac{q_k^n}{q_{k-1}^{n\tau} \left(\log \log q_k\right)^n}$$ hypercubes of new generation in $C_{\tau}\!\!\left(\frac{\bm{p_{k-1}}}{q_{k-1}}\right)$ of the form $C_{\tau}\!\!\left(\frac{\bm{p}}{q_{k}}\right)$ ($\bm{p}\in\Z^n$) which are furthermore at least $$\epsilon_k := \frac{1}{2q_k^n}$$ apart (as should be obvious). The set $E_k$ is then defined as the union of all these hypercubes over all the connected components of $E_{k-1}$. 

The level set $E$ obtained this way may again be equipped with a probability measure supported on it. Given $\delta > 0$, the same argumentation as in the case $\nu\!\left(\mathcal{Q}\right) > 0$ shows that the sequence $\left(q_k\right)_{k\ge 0}$ may be chosen in such a way that the Mass Distribution Principle (Theorem~\ref{massdistributionprinciple}) leads to the estimate 
\begin{equation}\label{borneinfcasnuzero}
\dim E \, \ge \, \rho:=\frac{n-2\delta}{\tau}\cdotp
\end{equation} It should however be mentioned that inequality~(\ref{nbrdintersectiondeU}) must now be replaced by the following one~: $$N_U\, \ll \, \left|U\right|^n \varphi\left(q_{k+1}\right)^n.$$

Letting $\delta$ tend to zero in~(\ref{borneinfcasnuzero}) completes the proof of Theorem~\ref{thmprinciliminf} in this case also.

\section{A $p$--adic version of the main Theorem}

Let $p$ be an arbitrary but fixed prime. 

An analogue of Theorem~\ref{thmprinciliminf} is now studied in $\Q_p$. Consider first the $p$--adic version of the set of $\tau$--well approximable numbers ($\tau >0$) in $\Q_p$, namely
\begin{equation}\label{limsuppadique}
W_{\tau, n}(p) := \left\{ \bm{x}\in \Q_p^n \; : \; \left|q\bm{x}-\bm{r}\right|_p < \max\left(\left|\bm{r}\right|, q \right)^{-\tau} \; \; \mbox{ for  i.m. } (\bm{r},q) \in \Z^n\times\N \right\}.
\end{equation} 
Here, $\left| \bm{x} \right|_p$ denotes the supremum of the $p$--adic norms of the components of $\bm{x}\in\Q^n_p$.
 
Note that, unlike in~(\ref{enslimsup}), the approximating function depends now both on $\left|\bm{r}\right|$ and $q$ rather than simply $q$. This is due to the fact that, in the $p$--adic setup, given $x\in\Z_p$, a quantity of the form $\left|qx-r\right|_p$ can be made arbitrarily small by taking $r$ to be a rational integer with the appropriate number of leading terms taken from the $p$--adic expansion of $qx$. Thus the set of $\bm{x}\in\Q_p^n$ such that $\left|q\bm{x}-\bm{r}\right|_p < q^{-\tau}$ for infinitely many $(\bm{r},q) \in \Z^n\times\N$ contains the whole of $\Z_p^n$ and has therefore full Hausdorff dimension regardless of the value of $\tau>0$.

Another difference with~(\ref{enslimsup}) is that, in the $p$--adic setup, there is no ``normalizing'' factor $q$ on the right--hand side of $\left|q\bm{x}-\bm{r}\right|_p$. This is due to the fact that the $p$--adic norm is an ultra metric. For more details, the limsup set $W_{\tau, n}(p)$ is studied in full generality in~\cite{meastheorlawlimsup}.

Let $W_{\tau, n}^*(p)$ be the liminf set obtained from~(\ref{limsuppadique}) by imposing the constraint that all the integers $q$ should be divisible by $p$, namely
\begin{equation}\label{liminfpadique}
W_{\tau, n}^*(p) := \left\{ \bm{x}\in \Q_p^n \; : \;
\begin{split}
&\left|q\bm{x}-\bm{r}\right|_p < \max\left(\left|\bm{r}\right|, q \right)^{-\tau} \; \; \mbox{     for  i.m. } (\bm{r},q) \in \Z^n\times p\N \\
&\mbox{and f.m. } (\bm{r},q) \not\in \Z^n\times p\N 
\end{split}
\right\},
\end{equation}
where \emph{f.m.} stands for \emph{finitely many}. The set $W_{\tau, n}^*(p)$ may be seen as an analogue of at least two different real liminf sets as introduced in~(\ref{limfinreel})~: on the one hand, it is defined as the set of elements in $\Q_p^n$ which are $\tau$--well approximable only by integer vectors $(\bm{r},q)$ such that $q$ is a multiple of the integer $p$ provided it is large enough. On the other, since the the gcd of two $p$--adic integers is the highest power of $p$ dividing both of them (it is defined up to an invertible element), $W_{\tau, n}^*(p)$ is also the set of all elements in $\Q_p^n$ $\tau$--well approximable only by integer vectors $(\bm{r},q)$ such that, provided it is large enough, $q$ is \emph{not} coprime to a given non unit $s\in\Z_p$.

The structure of the liminf set $W_{\tau, n}^*(p)$ exhibits very different behaviours depending on whether it is restricted to $\Z_p$ or not.

\begin{thm}\label{thmprincipalliminfpadique}
If $\tau > 1+1/n$, then $$\dim W_{\tau, n}^*(p) = \frac{n+1}{\tau}\cdotp$$ 
Furthermore, $W_{\tau, n}^*(p) \cap \Z^n_p = \emptyset$ as soon as $\tau \ge1$. 
\end{thm}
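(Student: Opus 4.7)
My plan is to treat the two assertions of the theorem separately.

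For the dimension formula, I will mirror the proof of Theorem~\ref{thmprinciliminf} in the $p$-adic setting. The upper bound $\dim W_{\tau,n}^*(p)\le (n+1)/\tau$ follows from covering the ambient limsup set by the $p$-adic balls $\{\bm{x}\in\Q_p^n:|q\bm{x}-\bm{r}|_p<\max(|\bm{r}|,q)^{-\tau}\}$ ranging over $(\bm{r},q)\in\Z^n\times\N$ with $q\ge N$, and summing the $s$-th powers of their $p$-adic diameters. The lower bound rests on $p$-adic analogues of Proposition~\ref{originedelacontrainte} and Lemma~\ref{lemmeconstruction}: starting from a ball at level $q_0$, count the balls of new generation at some level $q\in p\N$ lying inside it, extract a well-separated subfamily, then apply the Mass Distribution Principle exactly as in Section~3. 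The key structural simplification is that two $p$-adic balls are always nested or disjoint, which makes the delicate second case of Proposition~\ref{originedelacontrainte} --- the one responsible for the restriction $\tau>2+1/n$ in the real setting --- disappear entirely. Only an analogue of the first case survives, and it imposes the weaker threshold $\tau>1+1/n$. The remaining estimates should go through with only cosmetic changes.

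For the emptiness assertion, I will use a $p$-adic descent argument. Suppose towards a contradiction that $\bm{x}\in\Z_p^n\cap W_{\tau,n}^*(p)$ with $\tau\ge 1$; then there are infinitely many good approximations $(\bm{r},q)\in\Z^n\times p\N$ but only finitely many with $q\notin p\N$. For any such good pair, $q\bm{x}\in p\Z_p^n$ and $\max(|\bm{r}|,q)^{-\tau}\le p^{-\tau}<1$, so by the ultrametric inequality $|q\bm{x}-\bm{r}|_p<1$ forces $\bm{r}\in p\Z^n$. Writing $\bm{r}=p\bm{r}'$ and $q=pq'$ and using $|p|_p=p^{-1}$, one gets
\[
|q'\bm{x}-\bm{r}'|_p \; = \; p\,|q\bm{x}-\bm{r}|_p \; < \; p^{1-\tau}\max(|\bm{r}'|,q')^{-\tau} \; \le \; \max(|\bm{r}'|,q')^{-\tau},
\]
the last inequality using $\tau\ge 1$. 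Hence $(\bm{r}',q')$ is again a good approximation, and iterating the descent $v_p(q)$ times yields a good reduced pair $(\bm{r}^*,q^*)$ with $q^*\notin p\N$.

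By hypothesis only finitely many such reduced pairs can occur, so by pigeonhole some $(\bm{r}^*,q^*)$ admits infinitely many good lifts $(p^{k_i}\bm{r}^*,p^{k_i}q^*)$ with $k_i\to\infty$. Rearranging the defining inequality gives $|q^*\bm{x}-\bm{r}^*|_p<p^{k_i(1-\tau)}\max(|\bm{r}^*|,q^*)^{-\tau}$. When $\tau>1$ the right-hand side tends to zero, forcing $\bm{x}=\bm{r}^*/q^*\in\Q^n$ with denominator coprime to $p$; but then the multiples $(k\bm{r}^*,kq^*)$ with $k\in\N$, $\gcd(k,p)=1$, furnish infinitely many good approximations with $q\notin p\N$, the desired contradiction. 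The boundary case $\tau=1$ needs a separate direct argument since the descent no longer forces $\bm{x}$ to be rational: I would take the coordinatewise truncations $\bm{r}_N\in\Z^n$ of the $p$-adic expansion of $\bm{x}$, which satisfy $|\bm{x}-\bm{r}_N|_p\le p^{-N}<(p^N-1)^{-1}\le\max(|\bm{r}_N|,1)^{-1}$, so the pairs $(\bm{r}_N,1)$ provide infinitely many good approximations with $q=1\notin p\N$ (the degenerate case $\bm{x}\in\Z^n$ being handled by the multiples $(k\bm{x},k)$ with $\gcd(k,p)=1$). The main technical obstacle I anticipate is the $p$-adic version of Proposition~\ref{originedelacontrainte}: the notions of absolute $q$-primitivity and of ball of new generation must be recast to respect the discrete $p$-adic metric, and the $\varphi(q)^n$-type counts redone accordingly. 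By contrast the emptiness proof is technically light once the descent step is set up correctly, the only subtlety being the need to treat $\tau=1$ separately.
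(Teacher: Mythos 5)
Your proof of the emptiness assertion is correct and is a genuine alternative to the paper's: the paper first reduces to $p$--primitive approximating vectors (Lemma~\ref{lemmepprimitifcaspadique}) and then derives a contradiction from an ultrametric comparison of $\left|r_{i_0}\right|_p$ with $\left|qx_{i_0}\right|_p$, whereas your descent $(\bm{r},q)\mapsto(\bm{r}/p,q/p)$, combined with the pigeonhole on the finitely many reduced pairs and the truncation argument at $\tau=1$, reaches the same conclusion. Both routes work.

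The dimension part, however, has a genuine gap. Your own emptiness proof shows that $W_{\tau, n}^*(p)\cap\Z_p^n=\emptyset$, so a Cantor set ``mirroring'' the real construction cannot live in $\Z_p^n$: a ball $\left\{\left|q\bm{x}-\bm{r}\right|_p<\max\left(\left|\bm{r}\right|,q\right)^{-\tau}\right\}$ with $(\bm{r},q)$ non--$p$--primitive meets $\Z_p^n$ only in points already approximated by the reduced pair $(\bm{r}/p^{j},q/p^{j})$, and with $(\bm{r},q)$ $p$--primitive and $p\mid q$ it misses $\Z_p^n$ entirely. You never say where your construction takes place, and this is not a cosmetic detail: the paper's whole lower--bound argument is the structural observation that every large $p$--primitive approximation of a point of $W_{\tau, n}^*(p)$ forces some coordinate $x_{i_0}\in\Q_p\backslash\Z_p$ with $\left|qx_{i_0}\right|_p=\left|r_{i_0}\right|_p=1$, whence $W_{\tau, n}^*(p)=\bigcup_{i_0,f}W_{\tau, n}\left(p,i_0,f\right)$ is a countable union of genuine \emph{limsup} sets (membership in $W_{\tau, n}\left(p,i_0,f\right)$ automatically excludes approximations with $p\nmid q$, since those would force $\left|r_{i_0}\right|_p=p^{f}>1$ for an integer $r_{i_0}$). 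No analogue of Proposition~\ref{originedelacontrainte} and no ``new generation'' bookkeeping is needed; the lower bound is inherited from the known dimension of $p$--adic limsup sets, and that is where the threshold $\tau>1+1/n$ comes from --- not from a surviving ``first case'' of the real argument. Your upper bound also needs repair: the diameter of the ball attached to $(\bm{r},q)$ is $\left|q\right|_p^{-1}\max\left(\left|\bm{r}\right|,q\right)^{-\tau}$, and since $\left|q\right|_p^{-1}$ is unbounded the naive sum over all $(\bm{r},q)$ does not converge for $s$ just above $(n+1)/\tau$; one must first fix $\left|q\right|_p=p^{-f}$ as in the decomposition above, or simply quote the upper bound for the ambient limsup set.
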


Thus the situation is quite original~: the liminf set $W_{\tau, n}^*(p)$ has the same Hausdorff dimension as the limsup set $W_{\tau, n}(p)$ when $\tau > 1+1/n$ (cf. p.82 of~\cite{meastheorlawlimsup}) but it contains no $p$--adic integers. This is in contrast with the fact that, when considering the limsup set $W_{\tau, n}(p)$ from a metric point of view, it generally suffices to study its intersection with $\Z_p^n$ as the space $\Q_p^n$ can be written as a countable union of translates of $\Z_p^n$.

The proof of Theorem~(\ref{thmprincipalliminfpadique}) rests on the following lemma which uses Definition~\ref{vecteurprimitif}.

\begin{lem}\label{lemmepprimitifcaspadique}
If $\tau \ge 1$, then the limsup set $W_{\tau, n}(p)$ is also the set $$W_{\tau, n}(p) = \left\{ \bm{x}\in \Q_p^n \; : \; \left|q\bm{x}-\bm{r}\right|_p < \max\left(\left|\bm{r}\right|, q \right)^{-\tau} \; \; \mbox{ for  i.m. $p$--primitive } (\bm{r},q) \in \Z^n\times\N \right\}.$$
\end{lem}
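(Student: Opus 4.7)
The inclusion showing that the right-hand side is contained in $W_{\tau,n}(p)$ is immediate, since every $p$-primitive pair $(\bm{r},q)$ is in particular a pair. For the reverse inclusion --- that every $\bm{x} \in W_{\tau,n}(p)$ admits infinitely many \emph{$p$-primitive} approximants --- the core of the argument is a reduction step. Given any solution $(\bm{r},q) \in \Z^n \times \N$ to the defining inequality of $W_{\tau,n}(p)$, let $p^k$ be the largest power of $p$ dividing $\gcd(r_1,\dots,r_n,q)$, and set $(\bm{r}',q') := p^{-k}(\bm{r},q)$, which is $p$-primitive by construction. A direct computation using the $p$-adic and Archimedean norms gives
\[
\lvert q'\bm{x} - \bm{r}'\rvert_p = p^k \lvert q\bm{x} - \bm{r}\rvert_p \qquad \text{and} \qquad \max(\lvert\bm{r}'\rvert,q')^{-\tau} = p^{k\tau}\max(\lvert\bm{r}\rvert,q)^{-\tau}.
\]
Since $\tau \ge 1$ implies $p^{k\tau} \ge p^k$, the original inequality transfers to $(\bm{r}',q')$, which is therefore itself a solution. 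This is the unique place where the assumption $\tau \ge 1$ is used.

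To upgrade ``each solution admits a primitive reduction'' to ``infinitely many $p$-primitive solutions exist'', I would proceed by contradiction. Suppose only finitely many $p$-primitive pairs $s_1,\dots,s_l$ satisfied the inequality. Then every one of the (infinitely many) solutions is of the form $p^k s_i$ for some $i$ and some $k \ge 0$, and by the pigeonhole principle some $s_i = (\bm{r}'_i,q'_i)$ admits $p^k s_i$ as a solution for infinitely many $k$. Rewriting the defining inequality for $p^k s_i$ yields
\[
\lvert q'_i\bm{x} - \bm{r}'_i\rvert_p < p^{-k(\tau - 1)}\,\max(\lvert\bm{r}'_i\rvert,q'_i)^{-\tau}.
\]
When $\tau > 1$, letting $k \to \infty$ forces the left-hand side to vanish, so $\bm{x} = \bm{r}'_i/q'_i$ is rational. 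But then every pair $(m\bm{r}'_i,mq'_i)$ with $\gcd(m,p)=1$ is a distinct $p$-primitive solution (the left-hand side of the inequality being zero, while the right-hand side is positive), contradicting the assumed finiteness.

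The obstacle I anticipate is the boundary case $\tau = 1$, where the factor $p^{-k(\tau-1)}$ equals $1$ and the limiting argument above does not produce a contradiction by itself. In this regime I expect the proof has to be completed by a more direct $p$-adic construction, for instance by invoking a non-Archimedean Dirichlet-type theorem to manufacture new $p$-primitive approximants independently, or by an ad hoc perturbation of $(\bm{r}'_i,q'_i)$ tailored to the ultrametric geometry. This is where the bookkeeping is likely to be most delicate, whereas the reduction step itself --- the conceptual heart of the lemma --- is essentially a one-line verification once the normalisations are in place.
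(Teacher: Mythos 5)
Your plan follows essentially the same route as the paper. The reduction step --- dividing $(\bm{r},q)$ by the largest power of $p$ in its content and using $\tau\ge 1$ to transfer the inequality to the resulting $p$--primitive pair --- is exactly the paper's first observation, and your pigeonhole argument for producing infinitely many primitive approximants is the contrapositive of the paper's one--line assertion that the multiples $m u_{0}$ of a fixed solution $u_0=(\bm{r_0},q_0)$ satisfying the inequality are characterised by $|m|_p|m|^{\tau}<|q_0\bm{x}-\bm{r_0}|_p^{-1}\max(|\bm{r_0}|,q_0)^{-\tau}$ and are therefore finite in number. On one point you are more careful than the paper: that finiteness fails when $\bm{x}=\bm{r_0}/q_0$, and you dispose of the rational case explicitly via the pairs $(m\bm{r_0},mq_0)$ with $\gcd(m,p)=1$, whereas the paper is silent about it.

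Your anticipated obstacle at $\tau=1$ is genuine, but you should know that the paper does not overcome it either: for $m=p^k$ one has $|m|_p|m|^{\tau}=p^{k(\tau-1)}$, which equals $1$ for all $k$ when $\tau=1$, so every $p$--power multiple of a solution is again a solution and the paper's finiteness claim is simply false in that case; a priori all the approximants of $\bm{x}$ could be $p$--power multiples of a single primitive vector. Your first suggested repair is the right one: by the $p$--adic Dirichlet theorem every $\bm{x}\in\Q_p^n$ admits infinitely many solutions of $|q\bm{x}-\bm{r}|_p<\max(|\bm{r}|,q)^{-(1+1/n)}$, and running your reduction--plus--pigeonhole argument at the exponent $1+1/n>1$ yields infinitely many $p$--primitive approximants at that exponent, which are a fortiori $p$--primitive approximants at exponent $\tau=1$ (the rational case being handled as before). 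With that one further input your plan is complete, and for $\tau>1$ it already is.
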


\begin{proof}
Given $\bm{x}\in W_{\tau, n}(p)$, let $\left(u_k:=\left(\bm{r_k}, q_k\right)\right)_{k\ge 1}$ be the sequence strictly increasing in $q_k$ of elements of $\Z^n\times\N$ satisfying 
\begin{equation}\label{relationdapproximationpadique}
\left|q_k\bm{x}-\bm{r_k}\right|_p < \max\left(\left|\bm{r_k}\right|, q_k \right)^{-\tau}.
\end{equation}
Note that if $k_0$ and $m$ are positive integers, $m u_{k_0}$ satisfies~(\ref{relationdapproximationpadique}) if, and only if, 
$$1 \, \underset{(\tau \ge 1)}{\le} \, \left| m \right|_p \left| m\right|^{\tau} \, < \, \left|q_{k_0}\bm{x} - \bm{r_{k_0}}\right|_p^{-1} \max \left(q_{k_0}, \left|\bm{r_{k_0}}\right|\right)^{-\tau}.$$
The first of these inequalities shows that $u_{k_0}$ is a multiple of a $p$--primitive vector $\tilde{u}_{k_0}$ and the second one proves that the number of multiples of $u_{k_0}$ satisfying~(\ref{relationdapproximationpadique}) is finite.
\end{proof}

\begin{coro}\label{corollaireintersectionentierspadiquesvide}
Assume that $\tau \ge 1$. 

Then $$W_{\tau, n}^*(p) \cap \Z_p^n = \emptyset .$$
\end{coro}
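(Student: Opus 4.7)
My plan is to derive a contradiction by combining the $p$--primitivity reduction of Lemma~\ref{lemmepprimitifcaspadique} with a short ultrametric computation. The whole argument hinges on the fact that if $\bm{x}\in\Z_p^n$ and $p\mid q$, then $q\bm{x}$ has $p$--adic norm at most $1/p<1$, which forces \emph{every} component of any sufficiently good integer approximation $\bm{r}$ to be divisible by $p$---in direct conflict with $p$--primitivity of the pair $(\bm{r},q)$.

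I would begin by assuming for contradiction that $\bm{x}\in W_{\tau,n}^*(p)\cap\Z_p^n$. By definition of the liminf set~(\ref{liminfpadique}), infinitely many $(\bm{r},q)\in\Z^n\times p\N$ satisfy the approximation inequality, while only finitely many $(\bm{r},q)\in\Z^n\times\N$ with $p\nmid q$ do. In particular $\bm{x}\in W_{\tau,n}(p)$, so Lemma~\ref{lemmepprimitifcaspadique} provides infinitely many $p$--primitive pairs $(\bm{r},q)$ satisfying the inequality; after discarding the finitely many among them with $p\nmid q$, one retains infinitely many $p$--primitive pairs with the additional property that $p\mid q$.

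Next, I would fix any such pair. Since $p\mid q$ and $(\bm{r},q)$ is $p$--primitive, at least one component $r_i$ of $\bm{r}$ must be coprime to $p$, so $|r_i|_p=1$. Meanwhile $\bm{x}\in\Z_p^n$ together with $p\mid q$ gives $|qx_i|_p\le 1/p<1$, and the bound $\max(|\bm{r}|,q)^{-\tau}\le 1$ (using $\tau\ge 1$ and $q\ge p\ge 2$) yields $|qx_i-r_i|_p<1$. The ultrametric inequality then forces
$$|r_i|_p \,\le\, \max\bigl(|r_i-qx_i|_p,|qx_i|_p\bigr) \,<\, 1,$$
contradicting $|r_i|_p=1$.

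I do not anticipate any real obstacle here: the content of the corollary is essentially already packaged in Lemma~\ref{lemmepprimitifcaspadique}, and the only piece of bookkeeping requiring mild care is ensuring that infinitely many $p$--primitive approximations survive once the finitely many pairs with $p\nmid q$ are removed---this is an immediate consequence of the liminf condition defining $W_{\tau,n}^*(p)$. The ultrametric computation itself is completely routine.
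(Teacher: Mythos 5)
Your proposal is correct and follows essentially the same route as the paper: reduce to $p$--primitive approximating pairs via Lemma~\ref{lemmepprimitifcaspadique}, use the liminf condition to force $p\mid q$ for all sufficiently large approximations, extract a component $r_i$ with $\left|r_i\right|_p=1$, and reach an ultrametric contradiction. Your final step is in fact a slight streamlining of the paper's, which argues by a trichotomy comparing $\left|r_{i_0}\right|_p$ with $\left|qx_{i_0}\right|_p$, whereas you use $\bm{x}\in\Z_p^n$ and $p\mid q$ at once to get $\left|qx_i\right|_p<1$ and $\left|qx_i-r_i\right|_p<1$ and conclude $\left|r_i\right|_p<1$ in a single ultrametric estimate.
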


\begin{proof}
Let $\bm{x} = \left(x_1, \dots , x_n\right)\in W_{\tau, n}^*(p) \cap \Z_p^n$ and let $(\bm{r}, q)\in \Z^n\times \N$ be a vector of approximation of $\bm{x}$, i.e. a vector satisfying~(\ref{relationdapproximationpadique}). From Lemma~\ref{lemmepprimitifcaspadique}, $(\bm{r}, q)$ may be assumed to be $p$--primitive which, from the definition of the liminf set $W_{\tau, n}^*(p)$ and provided that $q$ is large enough, implies on the one hand that $p|q$ and on the other that $\left| r_{i_0}\right|_p = 1$ for some component $r_{i_0}\in\Z$ of the vector $\bm{r}:=\left(r_1, \dots, r_n\right)\in\Z^n$. In particular, 
\begin{equation}\label{deuxiemerelationapproximationpadique}
\left|qx_{i_0}-r_{i_0} \right|_p < \max\left(q, \left|r_{i_0}\right|\right)^{-\tau}.
\end{equation}
Now if $1=\left|r_{i_0}\right|_p > \left|qx_{i_0}\right|_p$, (\ref{deuxiemerelationapproximationpadique}) implies that $\left|qx_{i_0}-r_{i_0} \right|_p = \left|r_{i_0}\right|_p = 1 < \left|r_{i_0}\right|_p^{-\tau}$, which is impossible.
If $1=\left|r_{i_0}\right|_p < \left|qx_{i_0}\right|_p$, then it follows from~(\ref{deuxiemerelationapproximationpadique}) that $1\, < \, \left|qx_{i_0}\right|_p  = \left|qx_{i_0}-r_{i_0} \right|_p \, < \, q^{-\tau}$, which cannot happen.
Finally, if $\left|r_{i_0}\right|_p =1 = \left|qx_{i_0}\right|_p$, then, since $p|q$, $1\, > \, \left|q\right|_p = \left|x_{i_0}\right|_p^{-1} \, \underset{(x_0\in\Z_p)}{\ge} \, 1,$ which gives again a contradiction.
This completes the proof of the corollary.
\end{proof}

\paragraph{}\emph{Completion of the proof of Theorem~\ref{thmprincipalliminfpadique}.}
From the proof of Corollary~\ref{corollaireintersectionentierspadiquesvide}, it also follows that if $\left(\bm{r},q\right)$ is a $p$--primitive vector of approximation of $\bm{x} = \left(x_1, \cdots, x_n\right)\in W_{\tau, n}^*(p)$ such that $p|q$ and $p$ does not divide a component $r_{i_0}\in\Z$ of $\bm{r}$, then, necessarily, $\left|r_{i_0}\right|_p = 1 = \left|qx_{i_0}\right|_p$. This implies in particular that $x_{i_0}\in \Q_p\backslash\Z_p$. Note also that the condition $\left|qx_{i_0}\right|_p=1$ alone is sufficient to guarantee that $\left|r_{i_0}\right|_p = 1$~: indeed, if one had $\left|r_{i_0}\right|_p < 1 = \left|qx_{i_0}\right|_p$, then one would also have $\left|qx_{i_0}\right|_p = \left|qx_{i_0} -r_{i_0}\right|_p = 1 < q^{-\tau}$, which cannot be.

Thus, each $p$--primitive vector of approximation $\left(\bm{r},q\right)$ of $\bm{x}\in W_{\tau, n}^*(p)$ determines at least one component $x_{i_0}$ of $\bm{x}$ such that $x_{i_0}\in\Q_p \backslash \Z_p$. Since there are only finitely many components, it follows that
\begin{equation*}
W_{\tau, n}^*(p) = \left\{ \bm{x}\in \Q_p^n \; : \; \exists i_{0}\in\llbracket 1, n \rrbracket, \, x_{i_0}\in\Q_p \backslash \Z_p \mbox{ and }
\begin{split}
&\left|q\bm{x}-\bm{r}\right|_p < \max\left(\left|\bm{r}\right|, q \right)^{-\tau}\\
&\left|qx_{i_0}\right|_p = \left|r_{i_0}\right|_p = 1
\end{split}
\mbox{  i.o. }\right\},
\end{equation*}
where \emph{i.o.} stands for \emph{infinitely often}. Therefore,

\begin{equation*}
W_{\tau, n}^*(p) = \bigcup_{i_0=1}^{n}\left\{ \bm{x}\in \Q_p^n \; : \; x_{i_0}\in\Q_p \backslash \Z_p \mbox{ and }
\begin{split}
&\left| q\bm{x}-\bm{r}\right|_p < \max\left(\left|\bm{r}\right|, q \right)^{-\tau}\\
&\left| qx_{i_0}\right|_p = 1
\end{split}
\mbox{  i.o. }\right\}.
\end{equation*}
For any $f\in\N$, denote by $W_{\tau, n}\left(p, i_0, f\right)$  the set
\begin{equation*}
W_{\tau, n}\left(p, i_0, f\right) := \left\{ \bm{x}\in \Q_p^n \; : \; \left|x_{i_0}\right|_p = p^{f} \mbox{ and }
\begin{split}
&\left|q\bm{x}-\bm{r}\right|_p < \max\left(\left|\bm{r}\right|, q \right)^{-\tau}\\
&\left|qx_{i_0}\right|_p = 1
\end{split}
\mbox{  i.o. }\right\}.
\end{equation*}
Then
\begin{align*}
W_{\tau, n}^*(p) =  \bigcup_{i_0=1}^{n} \bigcup_{f=1}^{+\infty} W_{\tau, n}\left(p, i_0, f\right)
\end{align*}
and it suffices to establish the dimensional result in Theorem~\ref{thmprincipalliminfpadique} for any of the sets $W_{\tau, n}\left(p, i_0, f\right)$. 

Fix $f\ge 1$ and $i_0\in\llbracket 1, n \rrbracket$. Given $(\bm{r},q) \in \Z^n\times \N$, let $\nu(\bm{r},q):= \max\left(q, \left|\bm{r}\right| \right)$ and let $$B\left(\bm{x}, \rho \right):=\left\{\bm{a}\in\Q_p^n\; : \; \left|\bm{x}-\bm{a}\right|_p < \rho \right\}$$ denote the open ball of radius $\rho>0$ centered at $\bm{x}\in\Q_p^n$. It should then be clear that a cover for $W_{\tau, n}\left(p, i_0, f\right)$ is given by $$\bigcap_{N=1}^{+\infty} \bigcup_{\nu > N} \bigcup_{\nu(\bm{r},q)\in \mathcal{A}_{\nu}\left(i_0,f\right)}B\left(\frac{\bm{r}}{q}, \nu^{-\tau}p^f\right),$$ where $$\mathcal{A}_{\nu}\left(i_0,f\right) := \left\{(\bm{r},q) \in \Z^n\times \N \; : \; \left|q\right|_p = p^{-f}, \left|r_{i_0}\right|_p = 1 \mbox{ and } \nu(\bm{r},q)=\nu \right\}.$$
Furthermore, it is readily checked that $\# \mathcal{A}_{\nu}\left(i_0,f\right) \asymp v^{n}$, where the implicit constants depend on $n$ and $p$. Hence, for any $N>0$, $$\mathcal{H}^s\!\left(W_{\tau, n}\left(p, i_0, f\right))\right) \, \ll \, \sum_{\nu > N} \sum_{\nu(\bm{r},q)\in \mathcal{A}_{\nu}\left(i_0,f\right)} \nu^{-\tau s} \, \ll \, \sum_{\nu > N} \nu^{-\tau s +n},$$ which is finite as soon as $s> (n+1)/\tau$, so that $\dim\left(W_{\tau, n}\left(p, i_0, f\right)\right)\le (n+1)/\tau$.

The proof that $\dim\left(W_{\tau, n}\left(p, i_0, f\right)\right)\ge (n+1)/\tau$ is very similar to the corresponding result for the limsup set $W_{\tau, n}(p)$ as defined in~(\ref{limsuppadique}) and shall therefore not be given~: this is due to the fact that $W_{\tau, n}\left(p, i_0, f\right)$ is itself a limsup set. For further details, the reader is referred to~\cite{aberhausdpadic} and~\cite{meastheorlawlimsup}.

This completes the proof of Theorem~\ref{thmprincipalliminfpadique}.

\renewcommand{\abstractname}{Acknowledgements}
\begin{abstract}
The author would like to thank his PhD supervisor Detta Dickinson for discussions which helped to develop ideas put forward. He would also like to thank his friend Jean--Matthieu George--Hanna for the good times spent together when thinking of all these problems. He is supported by the Science Foundation Ireland grant RFP11/MTH3084.
\end{abstract}

\bibliographystyle{plain}
\bibliography{liminf_sets}

\end{document}